\documentclass[11pt]{amsart}
\setlength{\textheight}{8.9in}
\setlength{\textwidth}{6.5in}
\hoffset=-0.8in
\voffset=0.1in

\usepackage{latexsym}
\usepackage{amsfonts,amssymb,amsmath,amsthm,amscd}
\usepackage[mathscr]{eucal}
\usepackage{graphicx}
\usepackage{subfigure}

\newcommand{\G}{\mbox{${\mathcal G}$}}

\newcommand{\E}{\mbox{${\mathcal E}$}}

\newcommand{\g}{\mbox{${\mathfrak g}$}}

\newcommand{\kf}{\mbox{${\mathfrak k}$}}

\newcommand{\p}{\mbox{${\mathfrak p}$}}

\newcommand{\C}{\mbox{${\mathbb C}$}}
\newcommand{\HH}{\mbox{${\mathbb H}$}}
\newcommand{\I}{\mbox{${\mathbb I}$}}

\newcommand{\PP}{\mbox{${\mathbb P}$}}

\newcommand{\R}{\mbox{${\mathbb R}$}}
\newcommand{\Z}{\mbox{${\mathbb Z}$}}

\newcommand{\tr}{{\rm tr}}
\newcommand{\ric}{{\rm Ric}}

\makeatletter
\def\numberwithin#1#2{\@ifundefined{c@#1}{\@nocnterrr}{%
  \@ifundefined{c@#2}{\@nocnterr}{%
  \@addtoreset{#1}{#2}%
  \toks@\expandafter\expandafter\expandafter{\csname the#1\endcsname}%
  \expandafter\xdef\csname the#1\endcsname
    {\expandafter\noexpand\csname the#2\endcsname
     .\the\toks@}}}}
\makeatother
\numberwithin{equation}{section}

\newtheorem{thm}[equation]{Theorem}
\newtheorem{lemma}[equation]{Lemma}
\newtheorem{prop}[equation]{Proposition}

\newtheorem{cor}[equation]{Corollary}
\newtheorem{ex}[equation]{Example}

\newtheorem{rem}[equation]{Remark}
\newenvironment{rmk}{\begin{rem} \em}{\end{rem}}

\begin{document}

\title{Non-K\"ahler Expanding Ricci Solitons II}
\author{M. Buzano}
\address{Department of Mathematics and Statistics, McMaster
University, Hamilton, Ontario, L8S 4K1, Canada}
\email{maria.buzano@gmail.com}
\author{A. S. Dancer}
\address{Jesus College, Oxford University, OX1 3DW, United Kingdom}
\email{dancer@maths.ox.ac.uk}
\author{M. Gallaugher}
\address{Department of Mathematics and Statistics, McMaster
University, Hamilton, Ontario, L8S 4K1, Canada}
\email{gallaump@mcmaster.ca}
\author{M. Wang}
\address{Department of Mathematics and Statistics, McMaster
University, Hamilton, Ontario, L8S 4K1, Canada}
\email{wang@mcmaster.ca}
\thanks{M. Wang is partially supported by NSERC Grant No. OPG0009421}

\date{revised \today}

\begin{abstract}
We produce new non-K\"ahler, non-Einstein, complete expanding gradient Ricci
solitons with conical asymptotics and underlying manifold of the form
$\R^2 \times M_2 \times \cdots \times M_r$, where $r \geq 2$ and $M_i$ are arbitrary
closed Einstein spaces with positive scalar curvature.
We also find numerical evidence for complete expanding solitons
on the vector bundles whose sphere bundles are
the twistor or ${\rm Sp}(1)$ bundles over quaternionic projective space.
\end{abstract}

\maketitle

\noindent{{\it Mathematics Subject Classification} (2000):
53C25, 53C44}

\bigskip
\setcounter{section}{-1}

\section{\bf Introduction}

In \cite{BDGW} we constructed complete steady gradient Ricci soliton
structures (including Ricci-flat metrics) on manifolds of the form
${\R}^2 \times M_2 \times \ldots \times M_r$ where $M_i, \,2 \leq i \leq r,$
are arbitrary closed Einstein manifolds with positive scalar curvature. We also
produced numerical solutions of the steady gradient Ricci soliton equation on
certain non-trivial $\R^3$ and $\R^4$ bundles over quaternionic projective
spaces.  In the current paper we will present the analogous results for the case
of {\em expanding} solitons on the same underlying manifolds.

Recall that a gradient Ricci soliton is a manifold $M$ together with
a smooth Riemannian metric $g$ and a smooth function $u$, called the soliton
potential, which give a solution to
the equation:
\begin{equation} \label{gradRS}
{\ric}(g) + {\rm Hess} (u) + \frac{\epsilon}{2} \, g = 0
\end{equation}
for some constant $\epsilon$. The soliton is then called expanding, steady, or shrinking
according to whether $\epsilon$ is greater, equal, or less than zero.

A gradient Ricci soliton is called {\em complete} if the metric $g$ is complete. The
completeness of the vector field $\nabla u$ follows from that of the metric (cf \cite{Zh}).
If the metric of a gradient Ricci soliton is Einstein, then either ${\rm Hess}\, u = 0$
(i.e., $\nabla u$ is parallel) or we are in the case of the Gaussian soliton (cf \cite{PW}
or \cite{PRS}).

At present most examples of non-K\"ahlerian expanding solitons arise from left-invariant
metrics on nilpotent  and solvable Lie groups (resp. {\em nilsolitons}, {\em solvsolitons}),
as a result of work by J. Lauret \cite{La1}, \cite{La3}, M. Jablonski \cite{Ja}, and many
others (cf the survey \cite{La2}). These expanders are however not of gradient type,
i.e., they satisfy the more general equation
\begin{equation} \label{RS}
{\ric}(g) + \frac{1}{2} \,{\sf L}_{X} g + \frac{\epsilon}{2} \, g = 0
\end{equation}
where $X$ is a vector field on $M$ and ${\sf L}$ denotes Lie differentiation.

A large class of complete, non-Einstein, non-K\"ahlerian expanders of gradient type
(with dimension $\geq 3$)  consists of an $r$-parameter family
of solutions to (\ref{gradRS}) on $\R^{k+1} \times M_2 \times \ldots \times M_r$ where $k>1$
and $M_i$ are positive Einstein manifolds. The special case $r=1$ (i.e., no $M_i$) is due to
Bryant \cite{Bry} and the solitons have positive sectional curvature. The $r=2$ case
is due to Gastel and Kronz \cite{GK}, who adapted B\"ohm's construction of complete
Einstein metrics with negative scalar curvature to the soliton case. The case of
arbitrary $r$ was treated in \cite{DW3} via a generalization of the dynamical system
studied by Bryant. The soliton metrics in this family are all of multiple warped
product type. In other words, the manifold is thought of as being foliated by hypersurfaces
of the form $S^{k} \times M_2 \times \ldots \times M_r$ each equipped with a product metric
depending smoothly on a real parameter $t$.

More recently, Schulze and Simon \cite{SS} constructed expanding gradient Ricci solitons with
non-negative curvature operator in arbitrary dimensions by studying the scaling limits
of the Ricci flow on complete open Riemannian manifolds with non-negative bounded
curvature operator  and positive asymptotic volume ratio.

As pointed out in \cite{BDGW}, the situation of multiple warped products on
nonnegative Einstein manifolds is rather special because of the automatic lower
bound on the scalar curvature of the hypersurfaces. This leads, in the case where
all factors have positive scalar curvature, i.e., $k>1$, to definiteness of certain
energy functionals occurring in the analysis of the dynamical system arising from
(\ref{gradRS}), and hence to coercive estimates on the flow. In the present case,
where one factor is a circle, i.e., $k=1$, we can pass, as in \cite{BDGW}, to a subsystem
where coercivity holds, and this is enough for the analysis to proceed. The new solitons
obtained, like those of \cite{DW3}, have conical asymptotics, and are not of K\"ahler type
(Theorem \ref{mainthm1}). We note that the lowest dimensional solitons we obtain
form a $2$-parameter family on $\R^2 \times S^2$. The special case $r=1$ was
analysed earlier by the physicists Gutperle, Headrick, Minwalla and Schomerus \cite{GHMS}.

As in \cite{BDGW} we also obtain a family of solutions to our soliton equations
that yield complete Einstein metrics of negative scalar curvature (Theorem \ref{mainthm2}).
These are analogous to the metrics discovered by B\"ohm in \cite{Bo}. Recall
that for B\"ohm's construction the fact that the hyperbolic cone over the
product Einstein metric on the hypersurface acts as an attractor plays an
important role in the convergence proof for the Einstein trajectories. When $k=1$,
however, no product metric on the hypersurface can be Einstein with positive scalar
curvature, so the hyperbolic cone construction cannot be exploited directly. It
turns out that the analysis of the soliton case already contains most of the
analysis required for the Einstein case. The new Einstein metrics we obtain have
exponential volume growth.

Since the underlying smooth manifolds in the present paper are identical to those in
\cite{BDGW}, our constructions give rise to pairs of homeomorphic but not diffeomorphic
non-Einstein expanding gradient Ricci solitons as well as similar pairs of complete
Einstein manifolds with negative scalar curvature. Furthermore, since our expanders and
Einstein metrics have asymptotically conical structures, we also obtain
pairs whose asymptotic cones are homeomorphic but not diffeomorphic. The details
can be found at the end of \S 3.

To make further progress in the search for expanders, we need to consider more
complicated hypersurface types where the scalar curvature may not be bounded
below.  In \cite{BDGW} we carried out numerical investigations of steady solitons
where the hypersurfaces are the total spaces of Riemannian submersions for which
the hypersurface metric involves two functions, one scaling the base and one the fibre
of the submersion. We now look numerically at expanding solitons with such hypersurface
types, in particular where the hypersurfaces are $S^2$ or  $S^3$ bundles over
quaternionic projective space. We produce numerical evidence of complete expanding
gradient Ricci soliton structures in these cases.

Before undertaking our theoretical and numerical investigations, we first prove
some general results about expanding solitons of cohomogeneity one type. Some of the
results follow from properties of general expanding gradient Ricci solitons. However,
the proofs are much simpler and sometimes the statements are sharper, which is helpful
in numerical studies.  The results include monotonicity and concavity properties for
the soliton potential similar to those proved in \cite{BDGW} in the steady case,
as well as an upper bound for the mean curvature of the hypersurfaces. To derive this
bound, we need to know that complete {\it non-Einstein} expanding gradient Ricci solitons
have infinite volume. We include a proof of this fact here (Prop. \ref{logvol}) since
we were not able to find an explicit statement in the literature. Finally we derive an
asymptotic lower bound for the gradient of the soliton potential, which is in turn used
to exhibit a general Lyapunov function for the cohomogeneity one expander equations.

\section{\bf Background on cohomogeneity one expanding solitons}

We briefly review the formalism \cite{DW1} for Ricci solitons of cohomogeneity one.
We work on a manifold $M$ with an open dense set foliated by equidistant diffeomorphic
hypersurfaces $P_t$ of real dimension $n$. The dimension of $M$, the manifold where we
construct the soliton, is therefore $n+1$. The metric is then of the form
$\bar{g}=dt^2 + g_t$ where $g_t$ is a metric on $P_t$ and $t$ is the arclength
coordinate along a geodesic orthogonal to the hypersurfaces. This set-up is more general
than the cohomogeneity one ansatz, as it allows us to consider metrics with no symmetry
provided that appropriate additional conditions on $P_t$ are satisfied, see the following
as well as Remarks 2.18 and 3.18 in \cite{DW1}. We will also suppose that
$u$ is a function of $t$ only.

We let $r_t$ denote the Ricci endomorphism of $g_t$, defined by
${\rm Ric}(g_t)(X,Y) = g_t(r_t(X),Y)$ and viewed as an endomorphism via $g_t$.
Also let $L_t$ be the shape operator of the hypersurfaces, defined by the equation
$\dot{g_t} = 2 g_t L_t$ where $g_t$ is regarded as an endomorphism
with respect to a fixed background metric $Q$. The Levi-Civita
connections of $\bar{g}$ and $g_t$ will be denoted by
$\overline{\nabla}$ and $\nabla$ respectively. The relative volume $v(t)$
is defined by $d \mu_{g_t} = v(t)\, d \mu_Q$

We assume that the scalar curvature $S_t=\tr(r_t)$ and the
mean curvature $\tr(L_t)$ (with respect to the normal $\nu=\frac{\partial}{\partial t}$)
are constant on each hypersurface. These assumptions hold, for example,
if $M$ is of cohomogeneity one with respect to an isometric Lie group action.
They are satisfied also when $M$ is a multiple warped product over an
interval.

The gradient Ricci soliton equation now becomes the system
\begin{eqnarray}
  -{\rm tr} (\dot{L}) - {\rm tr} (L^2) + \ddot{u} + \frac{\epsilon}{2} &=& 0,  \label{TT} \\
    r - ({\rm tr}\, L)L - \dot{L} + \dot{u} L + \frac{\epsilon}{2} \, \I &=& 0, \label{SS}\\
d ({\rm tr} L) + \delta^{\nabla} L &=& 0.  \label{TS}
\end{eqnarray}
The first two equations represent the components of the equation
in the directions normal and tangent to the hypersurfaces $P$, respectively.
The third equation represents
the equation in mixed directions---here $\delta^{\nabla} L$ denotes the
codifferential for $TP$-valued $1$-forms.

In the warped product case the final equation involving the codifferential
automatically holds. This is also true for cohomogeneity one metrics that are
{\em monotypic}, i.e., when there are no repeated real irreducible summands in
the isotropy representation of the principal orbits (cf \cite{BB}, Prop. 3.18).

There is a conservation law
\begin{equation} \label{cons1}
\ddot{u} + (-\dot{u} + {\rm tr}\, L)\, \dot{u}  -\epsilon u = C
\end{equation}
for some constant $C$.
Using our equations we may rewrite this as
\begin{equation}  \label{cons2}
S + {\rm tr} (L^2) - (\dot{u} - {\rm tr}\, L )^2 - \epsilon u +
\frac{1}{2}(n-1) \epsilon = C.
\end{equation}
where  $S := {\rm tr} (r_t)$ is the scalar curvature $S$ of the principal
orbits. If $\bar{R}$ denotes the scalar curvature of the
ambient metric $\bar{g}$, then
$$ \bar{R} = -2 {\rm tr} (\dot{L}) - {\rm tr} (L^2) - ({\rm tr} L)^2 +S. $$
We can deduce the equality
\begin{equation}\label{ham}
\bar{R} + \dot{u}^2 + \epsilon u = -C -\frac{\epsilon}{2}(n+1).
\end{equation}

\medskip
We let $\xi$ denote the {\em dilaton mean curvature}
\[
 \xi := -\dot{u} + {\rm tr} \, L.
\]
 This is the mean curvature
of the dilaton volume element $e^{-u} d \mu_{\bar{g}}$.
It is often useful to define a new independent variable $s$ by
\begin{equation} \label{st}
\frac{d}{ds} := \frac{1}{\xi} \frac{d}{dt},
\end{equation}
and use a prime to denote $\frac{d}{ds}$. We note that equation
(\ref{TT}) implies that $\dot{\xi} = -{\rm tr} (L^2) + \frac{\epsilon}{2}$.

It is also useful, following \cite{DHW}, to introduce the quantity
\[
\E := C + \epsilon u.
\]
The conservation law may now be rewritten (for nonzero $\epsilon$) as
\begin{equation} \label{Econs}
\ddot{\E} + \xi \dot{\E} - \epsilon \E =0.
\end{equation}
Note that for a function $t \mapsto f(t)$, the
quantity $\ddot{f} + \xi \dot{f}$ is just the $u$-Laplacian in the
sense of metric measure spaces.

Another useful quantity is the normalised mean curvature
\[
{\mathcal H} = \frac{{\rm tr} L}{\xi} = 1 + \frac{\dot{u}}{\xi}
= 1 +u^\prime,
\]
which was introduced in \cite{DW3} and \cite{DHW}.

\medskip

We now specialise to the case of {\em expanding solitons}, that is
\[
\epsilon >0.
\]
We shall consider complete noncompact expanding solitons with one special
orbit. We may take the interval $I$ over which $t$ ranges to be
$[0, \infty)$ with the special orbit placed at $t=0$. Let $k$ denote the dimension
of the collapsing sphere at $t=0$. We will moreover assume in this section that
$u(0)=0$, since adding a constant to the soliton potential does not affect the equations.

A basic result of B.L. Chen \cite{Chb} together with the strong maximum principle
says that for a non-Einstein expanding gradient Ricci soliton $\bar{R} > -\frac{\epsilon}{2}(n+1)$.
So we deduce from (\ref{ham}) that
\[
\E < 0  \,\,\,\, \mbox{and} \,\,\,\, (\dot{u})^2 < -\E := -(C + \epsilon u).
\]
Using the first inequality and the smoothness conditions at $t=0$ we
find as in the steady case that
$\ddot{u}(0) = \frac{C}{k+1} <0$, so completeness imposes
restrictions on our initial conditions.

Integrating the second inequality and using the initial conditions yield
\begin{equation} \label{u-ineq}
0 \leq  -u(t) < \frac{\epsilon}{4} t^2 + \sqrt{-C} t
\end{equation}
and
\begin{equation} \label{udot-ineq}
|\dot{u}| < \frac{\epsilon}{2} t + \sqrt{-C}.
\end{equation}
These are just the cohomogeneity one versions of general estimates of
the potential due to Z.-H. Zhang \cite{Zh}.

\begin{prop} \label{expander-pot}
For a non-Einstein, complete, expanding gradient Ricci soliton
of cohomogeneity one with a special orbit, the soliton potential $u$
is strictly decreasing and strictly concave
on $(0, \infty)$.
\end{prop}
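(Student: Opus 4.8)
The plan is to read everything off the conservation law (\ref{cons1}) together with the smoothness data at the special orbit, via a first-crossing argument of maximum-principle type. Rewrite (\ref{cons1}) as $\ddot u + \xi \dot u - \epsilon u = C$, and recall from the inequality $\E < 0$ noted above (a consequence of Chen's theorem and (\ref{ham})) that $\E = C + \epsilon u < 0$ on all of $(0,\infty)$; hence the fundamental identity $\ddot u + \xi \dot u = \E < 0$. At the special orbit, smoothness of $u$ as a function on $M$ (where a $k$-sphere with $k \geq 1$ collapses) forces $\dot u(0) = 0$, and we are already given $\ddot u(0) = \frac{C}{k+1} < 0$. So $u$ is strictly decreasing and strictly concave just to the right of $0$, and the entire task is to propagate these signs.

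First I would prove monotonicity. Suppose $\dot u$ has a first zero $t_1 > 0$; then $\dot u < 0$ on $(0,t_1)$ and $\dot u(t_1)=0$, so necessarily $\ddot u(t_1) \geq 0$. Evaluating the identity $\ddot u + \xi \dot u = \E$ at $t_1$ collapses the middle term and yields $\ddot u(t_1) = \E(t_1) < 0$, a contradiction. Thus $\dot u < 0$ throughout $(0,\infty)$.

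For concavity I would differentiate (\ref{cons1}) to obtain $\dddot u + \dot\xi \dot u + \xi \ddot u - \epsilon \dot u = 0$ and substitute $\dot\xi = -\tr(L^2) + \frac{\epsilon}{2}$ (recorded just after (\ref{st})). This gives $\dddot u + \xi \ddot u = (\frac{\epsilon}{2} + \tr(L^2))\,\dot u$. If $\ddot u$ had a first zero $t_2 > 0$, then $\ddot u < 0$ on $(0,t_2)$ with $\ddot u(t_2)=0$ forces $\dddot u(t_2) \geq 0$; but the differentiated identity at $t_2$ reads $\dddot u(t_2) = (\frac{\epsilon}{2} + \tr(L^2)(t_2))\,\dot u(t_2)$, which is strictly negative because $\dot u(t_2) < 0$ by the monotonicity already established. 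This contradiction gives $\ddot u < 0$ on $(0,\infty)$.

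The one point that really carries the argument is the sign of the coefficient $\epsilon - \dot\xi = \frac{\epsilon}{2} + \tr(L^2)$: its strict positivity, coming from $\epsilon > 0$ (expansion) together with $\tr(L^2) \geq 0$, is exactly what closes the concavity step, and it dictates the order of proof, since monotonicity must be in hand to fix the sign of $\dot u(t_2)$. No difficulty arises at infinity, as only the first crossing is examined; the only item needing care is the smoothness claim $\dot u(0) = 0$, which is standard when $t$ is geodesic distance to a collapsing sphere, since $u$ extends to an even function of $t$ there.
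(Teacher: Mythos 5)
Your proof is correct, and it splits into two halves of unequal novelty relative to the paper. The monotonicity step is essentially the paper's own argument: the paper also reduces to the identity $\ddot u + \xi\dot u = \E < 0$ (phrased via the conservation law (\ref{Econs}) for $\E$), observes concavity at $t=0$ and at every would-be critical point, and rules out critical points on $(0,\infty)$ exactly by your first-crossing contradiction $\ddot u(t_1) = \E(t_1) < 0$ versus $\ddot u(t_1)\geq 0$. The concavity step, however, is genuinely different. The paper derives the same differentiated equation $\ddot y + \xi\dot y - \bigl(\tfrac{\epsilon}{2}+\tr(L^2)\bigr) y = 0$ for $y=\dot u$, but instead of a pointwise crossing argument it drops the zeroth-order term (using $y<0$) to get $\ddot y + \xi\dot y < 0$, multiplies by the integrating factor $v e^{-u}$ (whose logarithmic derivative is $\xi$), and integrates from the special orbit, where the smoothness conditions give $v e^{-u}\dot y \to 0$ since $v\sim t^k$ collapses; this forces $\dot y = \ddot u < 0$ globally. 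Your route instead uses strict positivity of the coefficient $\tfrac{\epsilon}{2}+\tr(L^2) \geq \tfrac{\epsilon}{2} > 0$ together with the already-established sign $\dot u(t_2)<0$ to kill a first zero of $\ddot u$; the only boundary input you need is $\ddot u(0)=\tfrac{C}{k+1}<0$, not the vanishing of the weighted quantity $v e^{-u}\ddot u$ at the collapsing orbit, which makes your version marginally more elementary. What the paper's integration buys in exchange is a quantitative statement — $v e^{-u}\ddot u$ is strictly decreasing, a monotone weighted second derivative in the spirit of the $u$-Laplacian remark after (\ref{Econs}) — and uniformity with the steady-case argument of \cite{BDGW}; your comparison-type argument, correctly, must establish monotonicity first to fix the sign of $\dot u(t_2)$, a dependency you rightly flag. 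Both arguments use the same external inputs ($\E<0$ from Chen's theorem via (\ref{ham}), and the smoothness conditions at $t=0$), so there is no gap in either half.
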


\begin{proof}
The conservation law (\ref{Econs})
and the fact that $\E$ is negative and $\epsilon$
is positive show that $u$ is strictly concave on a neighbourhood of
each critical point $t_0$. As we noted above, we also
have concavity at the special orbit $t=0$.
Now, as in the steady case \cite{BDGW}, we see
there are no critical points
of $u$ in $(0, \infty)$. As $\dot{u}(0)=0$,
we see $u$ is strictly decreasing on $(0, \infty)$.

Now set $y = \dot{u}$ and differentiate (\ref{cons1}); using (\ref{TT})
we obtain
\[
\ddot{y} + \xi \dot{y} - \left(\frac{\epsilon}{2} + {\rm tr}(L^2)\right) y =0.
\]
In particular $\ddot{y} + \xi \dot{y} <0$ since $y$ is negative.
Integrating shows $v e^{-u} \dot{y}$ is strictly decreasing, where
we recall that $v$ is the relative volume. As $t$ tends to $0$, the smoothness
conditions imply that $v e^{-u} \dot{y}$ tends to zero, so $\dot{y} = \ddot{u}$
is negative, as required.
\end{proof}

Our next result is inspired by the work of Munteanu-Sesum \cite{MS} for
the case of steady solitons.

\begin{prop} \label{vol}
For a non-Einstein, complete, expanding gradient Ricci soliton
of cohomogeneity one with a special orbit, the volume growth is
at least logarithmic.
\end{prop}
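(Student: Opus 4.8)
The plan is to bound the relative volume $v(t)$ from below pointwise and then integrate. The key observation is that the conservation law (\ref{cons1}) can be rewritten as an identity for an ordinary (unweighted) Laplacian. Expanding (\ref{cons1}) gives
\[
\ddot{u} + (\tr L)\,\dot{u} = C + \epsilon u + \dot{u}^2 = \E + \dot{u}^2,
\]
whose left-hand side is precisely $\Delta u$ for a function of $t$ alone. Since the relative volume obeys $\dot{v} = (\tr L)\,v$ by its definition, this is equivalent to the divergence form
\[
\frac{d}{dt}\bigl(v\,\dot{u}\bigr) = v\,\bigl(\E + \dot{u}^2\bigr).
\]
So the first step is to produce this identity, which isolates $v\,\dot{u}$ as the natural monotone quantity to track.

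Next I would invoke the inequality $(\dot{u})^2 < -\E$ established above (which itself rests on Chen's bound and hence on the non-Einstein hypothesis). This forces $\E + \dot{u}^2 < 0$, so $v\,\dot{u}$ is strictly decreasing on $(0,\infty)$. Fixing any $t_0 > 0$, Proposition \ref{expander-pot} gives $\dot{u}(t_0) < 0$, so $B := -\,v(t_0)\,\dot{u}(t_0) > 0$ and monotonicity yields
\[
v(t)\,|\dot{u}(t)| = -\,v(t)\,\dot{u}(t) \geq B \qquad (t \geq t_0).
\]
Pairing this lower bound on $v|\dot u|$ with the a priori upper bound (\ref{udot-ineq}), namely $|\dot{u}| < \frac{\epsilon}{2}t + \sqrt{-C}$, gives the pointwise estimate
\[
v(t) \geq \frac{B}{\tfrac{\epsilon}{2}t + \sqrt{-C}}.
\]
Integrating in $t$ then produces
\[
\int_{t_0}^{T} v(t)\,dt \geq \frac{2B}{\epsilon}\,\log\!\Bigl(\tfrac{\epsilon}{2}T + \sqrt{-C}\Bigr) + \text{const},
\]
and since the volume enclosed by $P_T$ is $\mathrm{vol}_Q(P)\int_0^T v\,dt$, it grows at least like $\log T$; in particular the total volume is infinite, as claimed in the introduction.

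The main obstacle is conceptual rather than computational: recognizing that (\ref{cons1}) already encodes the monotonicity of $v\,\dot{u}$, and then seeing that the logarithmic rate emerges from the \emph{interplay of the two-sided control} on $\dot u$ — a monotonicity-driven lower bound $v|\dot u|\geq B$ together with the quadratic-in-$t$ upper bound on $|\dot u|$. One should also check that fixing $t_0>0$ sidesteps any issue at the special orbit, where $v(0)=0$ and $\dot u(0)=0$ would otherwise make $B=0$ and the argument vacuous.
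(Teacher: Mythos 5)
Your proposal is correct and is essentially the paper's own argument: the paper establishes exactly the same monotonicity of $v\,\dot{u}$ (phrased as the nondecreasing integral $f(t)=\int_{M_t}(\bar{R}+\frac{\epsilon}{2}(n+1))\,d\mu_{\bar{g}} = |\dot{u}|\,v(t)$, obtained from the traced soliton equation and the divergence theorem, with Chen's bound making the integrand positive — which, via (\ref{ham}), is precisely your pointwise inequality $\E+\dot{u}^2<0$ in disguise). From there the two proofs coincide: a positive lower bound $v\,|\dot{u}|\geq B$ for $t\geq t_0>0$, division by the linear bound (\ref{udot-ineq}) on $|\dot{u}|$, and integration to the logarithmic estimate.
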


\begin{proof}
Let $M_t = \pi^{-1}([0,t])$ where $\pi$ is the projection of
$M$ onto the orbit space $I$. We consider the integral
\[
f(t) := \int_{M_t} \left( \bar{R} + \frac{\epsilon}{2}(n+1) \right) \;
d \mu_{\bar{g}}
\]
As we are considering non-Einstein solitons the integrand is positive.

Let $t_0 > 0$ and let $b := f(t_0)$.
Using the trace of the soliton equation and also the divergence theorem,
we have, for $t \geq t_0$:
\begin{eqnarray*}
0 < b  \leq f(t) &=& - \int_{M_t} \bar{\triangle} u \; d \mu_{\bar{g}} \\
                 &=& \int_{\partial M_t} (\bar{\nabla} u) \cdot(-\frac{\partial}
                      {\partial t}) \, d \mu_{\bar{g}}|_{\partial M_t} \\
                 &=& |\dot{u}|\, v(t)\\
                 &<&  (\frac{\epsilon}{2} t + \sqrt{-C} ) v(t)
\end{eqnarray*}
where we use (\ref{udot-ineq}) in the last line.
Hence $v(t) > \frac{b}{\frac{\epsilon}{2} t + \sqrt{-C}}$, and integrating
yields
\[
{\rm vol} (M_t) > {\rm vol} (M_{t_0}) -\frac{2b}{\epsilon}
\log (\frac{\epsilon}{2} t_0 + \sqrt{-C}) +\frac{2b}{\epsilon}
\log (\frac{\epsilon}{2} t + \sqrt{-C}).
\]
\end{proof}

\begin{prop} \label{MC}
Let $(M, \bar{g}, u)$ be
 a non-Einstein, complete, expanding gradient Ricci soliton
of cohomogeneity one with a special orbit.
Then there exists $t_1>0$ such that on $(t_1, \infty)$ we have
${\rm tr} \; L < \sqrt{\frac{n \epsilon}{2}}$.
\end{prop}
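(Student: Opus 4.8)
The plan is to derive a scalar Riccati differential inequality for the mean curvature and then run a barrier argument. Write $w := \mathrm{tr}\, L$ and set $A := \sqrt{n\epsilon/2}$. Since $\frac{d}{dt}\mathrm{tr}\,L = \mathrm{tr}\,\dot L$, equation (\ref{TT}) gives $\dot w = \frac{\epsilon}{2} - \mathrm{tr}(L^2) + \ddot u$. By Cauchy--Schwarz $\mathrm{tr}(L^2) \ge \frac{1}{n}(\mathrm{tr}\,L)^2 = \frac{1}{n}w^2$, and by Proposition \ref{expander-pot} the potential is strictly concave, so $\ddot u < 0$ on $(0,\infty)$. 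Combining these yields the strict inequality
\[
\dot w < \frac{\epsilon}{2} - \frac{1}{n}\, w^2 ,
\]
and $A$ is precisely the positive zero of the right-hand side.

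Next I would exploit this as a barrier. At any point where $w = A$ the right-hand side vanishes, so $\dot w < 0$ there; consequently $w$ can never cross the level $A$ upward (at a first up-crossing one would need $\dot w \ge 0$), and the region $\{w < A\}$ is forward invariant. Near the special orbit the collapsing sphere forces $w \to +\infty$ as $t \to 0^+$, so $w$ starts above $A$, while on any interval where $w \ge A$ the displayed inequality gives $\dot w < 0$, so $w$ is strictly decreasing there. It therefore suffices to show that $w$ actually attains a value below $A$ at some finite time; then $t_1 := \sup\{t : w(t) \ge A\}$ is finite and $w < A$ on $(t_1,\infty)$.

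The hard part is to rule out the borderline scenario in which $w > A$ for all $t$, strictly decreasing but only asymptotic to $A$ from above. In that case $\dot\xi = \frac{\epsilon}{2} - \mathrm{tr}(L^2) \le 0$, so $\xi$ is non-increasing and bounded below by $A$, whence $\xi \to \xi_\infty$ and $\mathrm{tr}(L^2) \to \frac{\epsilon}{2}$ along a sequence. This forces asymptotic equality in Cauchy--Schwarz, i.e. $L \to \sqrt{\epsilon/(2n)}\,\I$, so the orbit metrics scale homothetically and their scalar curvature satisfies $S \to 0$. I would then feed this into the conservation law (\ref{cons2}), conveniently rewritten (using $\dot\xi = \frac{\epsilon}{2}-\mathrm{tr}(L^2)$) as $\xi^2 + \dot\xi = \frac{n\epsilon}{2} + S - \E$, to evaluate $\lim \E$, and compare with the independent behaviour of $\E = C + \epsilon u$. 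Here B.-L. Chen's bound gives $\E < 0$, while Proposition \ref{expander-pot} gives $\dot\E = \epsilon \dot u < 0$, so $\E$ is strictly decreasing and negative. If $\xi_\infty > A$ then $\dot u \to -(\xi_\infty - A) < 0$ forces $u \to -\infty$ and hence $\E \to -\infty$, contradicting the finite limit produced by the conservation law; if $\xi_\infty = A$ the conservation law forces $\E \to 0$, contradicting that $\E$ is strictly decreasing and negative. Either way the borderline case is excluded.

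I expect this final step to carry all the weight, and it is also where the global hypotheses enter: the non-Einstein assumption (through Chen's strict inequality $\bar R > -\frac{\epsilon}{2}(n+1)$, hence $\E<0$) together with the infinite-volume conclusion of Proposition \ref{vol} are what guarantee that the end is genuinely noncompact with the expected homothetic asymptotics, so that the limiting manipulations above are legitimate. The first two paragraphs are soft and essentially self-contained; isolating the correct asymptotic normalisation of $L$ and extracting the contradiction from (\ref{cons2}) is the delicate point.
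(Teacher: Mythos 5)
Your first two paragraphs are sound and coincide with the paper's starting point: your Riccati inequality is exactly (\ref{mcder}), and your barrier/first-crossing argument is an equivalent substitute for the paper's case (i), where the comparison is made with the explicit solutions of (\ref{heqn}). The genuine gap is in your borderline scenario, at the step ``$L \to \sqrt{\epsilon/(2n)}\,\I$, so the orbit metrics scale homothetically and their scalar curvature satisfies $S \to 0$.'' You only obtain convergence of $L$ along a sequence $t_j$, and convergence of the shape operator at a sequence of times gives no control on the metrics $g_{t_j}$ themselves; to conclude asymptotically homothetic behaviour you would need $L - \sqrt{\epsilon/(2n)}\,\I$ to be integrable in $t$, whereas the monotone convergence of $\xi$ only yields $\int |L^{(0)}|^2\,dt < \infty$, which permits unbounded drift of the rescaled orbit metric. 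Moreover, the proposition concerns a general cohomogeneity one expander, where $S$ need not be bounded below (the introduction stresses precisely this), so even asymptotic homothety would not give $S\to 0$ without further argument. Worse, your own scenario contradicts the claim: by Proposition \ref{expander-pot}, $\dot{\E} = \epsilon \dot{u} < 0$, and from (\ref{Econs}), writing $\Phi(t) = \int_{t_0}^t \xi$ and $c := -\epsilon\,\E(t_0) > 0$, one has $\frac{d}{dt}\bigl(e^{\Phi}\dot{\E}\bigr) = \epsilon\,\E\, e^{\Phi} \le -c\, e^{\Phi}$; since $A < \xi \le \xi(t_0)$ in your scenario, integrating gives $\dot{\E}(t) \le -\frac{c}{\xi(t_0)}\bigl(1 - e^{-\Phi(t)}\bigr)$, so $\dot{\E}$ is eventually bounded away from zero and $\E \to -\infty$ linearly. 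Your own rewritten conservation law $\xi^2 + \dot{\xi} = \frac{n\epsilon}{2} + S - \E$ then forces $S(t_j) \to -\infty$ along your sequence, not $S \to 0$. Hence the ``finite limit produced by the conservation law'' does not exist in the decisive subcase $\xi_\infty > A$, and your contradiction evaporates. (The same computation does rule out your subcase $\xi_\infty = A$, since there $\dot{u} = \tr L - \xi \to 0$ while $\dot{\E} = \epsilon\dot{u}$ stays away from zero; but that was the easy subcase.)

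What is missing is a global input, and this is exactly where the paper's proof diverges from yours. In its case (ii) (your borderline scenario), after using Proposition \ref{vol} to rule out $\tr L$ becoming negative, the paper quotes Theorem 11 of \cite{PRS} to conclude $\xi \to \infty$; since $\xi \to \infty$ as $t \to 0^+$ as well, $\xi$ has an interior minimum $t_1$, where (\ref{TT}) gives $\tr(L^2) = \frac{\epsilon}{2}$, Cauchy--Schwarz gives $(\tr L)^2 \le \frac{n\epsilon}{2}$, and the monotonicity of $\tr L$ finishes. In your framing this input kills the borderline scenario in one line: you showed $\xi$ is decreasing there, hence bounded, contradicting $\xi \to \infty$. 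If you want to avoid the citation, follow the route indicated in the remark after Proposition \ref{expander-gradlb}: in your scenario $\tr L$ is decreasing, hence bounded above on $[t_1, \infty)$, so Proposition \ref{expander-gradlb}(i) applies and yields $-\dot{u}$ growing at least linearly, while $-\dot{u} = \xi - \tr L$ is bounded there --- a contradiction. Note also that your closing appeal to Proposition \ref{vol} does no actual work in your version (in the borderline case $\tr L > A > 0$ automatically), whereas the paper genuinely needs it; and its conclusion is infinite volume, not any ``homothetic asymptotics.''
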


\begin{proof}
By Cauchy-Schwartz and the concavity result, we have
\begin{equation} \label{mcder}
\frac{d}{dt} ({\rm tr} \; L) < \frac{\epsilon}{2} - {\rm tr} \; (L^2)
\leq \frac{\epsilon}{2} -\frac{1}{n}( {\rm tr} \; L)^2.
\end{equation}
Note that by the smoothness conditions ${\rm tr} \; L$ is strictly
decreasing near $t=0$, and its limit as $t$ tends to zero from above is
$+ \infty$.

\medskip
(i) First let us assume that $\frac{d}{dt}({\rm tr} L)$ is
nonnegative at some $t_1$.
The above inequality shows that $| {\rm tr} L|^2 < \frac{\epsilon n}{2}$
at $t=t_1$.

Let us consider the solutions of the equation
\begin{equation} \label{heqn}
\dot{h} = \frac{\epsilon}{2} - \frac{1}{n} h^2.
\end{equation}
These are the family of increasing functions
\[
h(t) = \sqrt{\frac{\epsilon n}{2}} \left(
\frac{a \exp(t \sqrt{\frac{2 \epsilon }{n}}) -1}
{a \exp(t \sqrt{\frac{2 \epsilon }{n}}) +1} \right),
\]
where $a$ is a positive constant,
as well as the constant functions $\pm \sqrt{\frac{\epsilon n}{2}}$
which form the bounding envelope for this family. Hence
${\rm tr} \; L \leq  h^*(t) < \sqrt{\frac{\epsilon n}{2}}$ where
$h^*(t)$ is the solution to (\ref{heqn}) which agrees with ${\rm tr} \; L$
at $t_1$.

\medskip
(ii) Next suppose that $\frac{d}{dt} ({\rm tr} L)$ is always negative.
Now if ${\rm tr} L$ is ever zero then it is negative and bounded
away from zero on some semi-infinite interval. Recalling that
${\rm tr} L = \frac{\dot{v}}{v}$ and integrating, we see that the soliton volume
is finite, which contradicts Proposition \ref{vol}. So ${\rm tr} L$ is
positive on $(0, \infty)$, and, using Proposition \ref{expander-pot},
we see $\xi$ is also positive on this interval. Theorem 11 of \cite{PRS}
shows that $\xi$ tends to infinity as $t$ tends to $\infty$. But $\xi$ also
tends to infinity as $t$ tends to zero, so we have a minimum $t_1$
where $\dot{\xi}$ vanishes. Now (\ref{TT}) shows ${\rm tr} (L^2)=
\frac{\epsilon}{2}$ at $t_1$ and Cauchy-Schwartz shows $({\rm tr} L)^2 \leq
\frac{n \epsilon}{2}$ at $t_1$. As ${\rm tr} L$ is decreasing, we have the
desired result.
\end{proof}

\begin{rmk}
This bound on ${\rm tr} L$ is best possible, at least if we allow
the solitons to be Einstein. Indeed, the negative scalar curvature
Einstein metrics of B\"ohm \cite{Bo} give exactly this bound, as
${\rm tr} L$ is asymptotic to $\frac{n \epsilon}{2}$.
\end{rmk}

Next we consider properties of the Lyapunov function ${\mathscr F}_0$
which was introduced by  B\"ohm in \cite{Bo} for the Einstein case and
was subsequently studied in \cite{DHW} and \cite{BDGW} for the soliton case.
Note that this function was denoted by $\mathscr F$ in \cite{DHW}.

\begin{prop} \label{Lyapunov}
Let ${\mathscr F}_0$ denote the function $v^{\frac{2}{n}}\left( S + \tr( (L^{(0)})^2 ) \right)$
defined on the velocity phase space of the cohomogeneity one expanding gradient
Ricci soliton equations, with $L^{(0)}$ representing the trace-free part of $L$.
Then along the trajectory of a complete smooth non-Einstein expanding soliton, ${\mathscr F}_0$
is non-increasing for sufficiently large $t$.
\end{prop}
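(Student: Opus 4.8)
The plan is to compute $\dot{\mathscr F}_0$ directly and exhibit it as a manifestly signed product. Write $H:=\tr L$ and recall $\dot v/v = H$, so that $\dot{\mathscr F}_0 = v^{2/n}\bigl(\tfrac{2}{n}H\,G + \dot G\bigr)$ with $G:=S+\tr((L^{(0)})^2)$. The cleanest route is to first trade $S$ for $\xi$ using the conservation law (\ref{cons2}): since $(\dot u-\tr L)^2=\xi^2$, that law reads $S+\tr(L^2)=\E+\xi^2-\tfrac12(n-1)\epsilon$, and subtracting $\tr(L^2)=\tr((L^{(0)})^2)+H^2/n$ gives the compact form
\[
G = \E + \xi^2 - \frac{H^2}{n} - \frac{(n-1)\epsilon}{2}.
\]

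First I would differentiate this, using $\dot\E=\epsilon\dot u$, the identity $\dot\xi=-\tr(L^2)+\tfrac{\epsilon}{2}$ recorded after (\ref{st}), and $\dot H=\tr\dot L=-\tr(L^2)+\ddot u+\tfrac{\epsilon}{2}$ from (\ref{TT}). After substituting $\tr(L^2)=\tr((L^{(0)})^2)+H^2/n$ and using $\xi=H-\dot u$ to collapse the quadratic-in-$\xi$ contribution (so that $\tfrac{2H}{n}\xi^2-\tfrac{2\xi H^2}{n}=-\tfrac{2\xi H\dot u}{n}$), the whole derivative regroups, after the $\pm\tfrac{2H^3}{n^2}$ and $\pm\tfrac{\epsilon H}{n}$ terms cancel, into
\[
\frac{\dot{\mathscr F}_0}{v^{2/n}} = 2\,\tr((L^{(0)})^2)\Bigl(\dot u - \frac{n-1}{n}\,H\Bigr) + \frac{2H}{n}\bigl(\E-\xi\dot u-\ddot u\bigr).
\]

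The key step, and the one that makes everything work, is that the second bracket vanishes identically: the original conservation law (\ref{cons1}) is precisely $\ddot u+\xi\dot u-\epsilon u=C$, i.e. $\ddot u+\xi\dot u=C+\epsilon u=\E$. Hence
\[
\dot{\mathscr F}_0 = 2\,v^{2/n}\,\tr((L^{(0)})^2)\Bigl(\dot u - \frac{n-1}{n}\,\tr L\Bigr).
\]
Since $\tr((L^{(0)})^2)\ge 0$ and $v^{2/n}>0$, only the sign of the bracket remains. By Proposition \ref{expander-pot} we have $\dot u<0$ on $(0,\infty)$, so it suffices to know $\tr L\ge 0$; and the volume lower bound of Proposition \ref{vol} together with the argument in the proof of Proposition \ref{MC} (were $\tr L$ eventually negative and bounded away from zero, integrating $\tr L=\dot v/v$ would force finite volume) shows $\tr L>0$ for all sufficiently large $t$. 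Thus the bracket is negative there and $\dot{\mathscr F}_0\le 0$.

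I do not expect the identity manipulations to be the real difficulty: the algebra telescopes cleanly once $S$ is traded for $\xi^2$ via (\ref{cons2}) and the leftover terms are recognised as the conservation law (\ref{cons1}). The only genuine subtlety, and the reason the conclusion is stated only for large $t$, is the positivity of $\tr L$, which need not hold for small $t$ in every case; the sign argument must therefore be deferred to the regime where Propositions \ref{vol} and \ref{MC} guarantee $\tr L>0$.
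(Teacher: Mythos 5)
Your computation of $\dot{\mathscr F}_0$ is correct: the identity
$\dot{\mathscr F}_0 = 2\,v^{2/n}\,\tr((L^{(0)})^2)\bigl(\dot u - \frac{n-1}{n}\tr L\bigr)$
reproduces exactly the formula that the paper imports from Proposition 2.17 of [DHW], and your derivation of it from (\ref{cons1}), (\ref{cons2}) and (\ref{TT}) is a legitimate self-contained substitute for that citation. The gap is in the sign argument. From $\dot u<0$ you reduce to showing $\tr L\ge 0$ for large $t$, and you claim this follows from Proposition \ref{vol} together with the finite-volume argument inside Proposition \ref{MC}. It does not. That argument only excludes the scenario in which $\tr L$ is eventually negative \emph{and bounded away from zero}: if $\tr L\le -c<0$ on a semi-infinite interval, then integrating $\tr L=\dot v/v$ forces exponential decay of $v$ and finite volume. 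It does not exclude $\tr L$ negative but tending to $0$ (then $v$ can decrease to a positive limit, the volume grows linearly, and Proposition \ref{vol} is perfectly consistent), nor does it exclude $\tr L$ changing sign infinitely often. In the proof of Proposition \ref{MC}, the stronger statement ``if $\tr L$ is ever zero then it is negative and bounded away from zero thereafter'' is available only in case (ii), under the standing hypothesis that $\frac{d}{dt}(\tr L)<0$ for all $t$; you have imported that conclusion without the monotonicity hypothesis. (A side remark: your last sentence has the regimes backwards --- near $t=0$ the smoothness conditions force $\tr L\to+\infty$, so positivity can only fail at large $t$, which is precisely where you need it.)

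The paper closes exactly this gap by a different pairing of estimates. It proves only the weaker eventual lower bound $\tr L > -\sqrt{\epsilon n/2}$: if $\tr L\le -\sqrt{\epsilon n/2}$ at some $t_1$, then (\ref{mcder}) shows the inequality persists for all $t\ge t_1$, forcing finite volume and contradicting Proposition \ref{vol}. It then compensates for the possibly negative values of $\tr L$ by invoking Proposition \ref{expander-gradlb}(i) --- whose hypothesis, an upper bound for $\tr L$ on a semi-infinite interval, is supplied by Proposition \ref{MC} --- to conclude that $-\dot u$ grows at least linearly in $t$. Hence for all sufficiently large $t$ one has $-\dot u + \frac{n-1}{n}\tr L \ge -\dot u - \frac{n-1}{n}\sqrt{\epsilon n/2} > 0$, which is the required sign. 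Your proof becomes correct if you replace the unsupported claim $\tr L>0$ by this pair of facts; as written, the final inequality $\dot{\mathscr F}_0\le 0$ is not established.
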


\begin{proof}
The formula for $\frac{d}{dt}{\mathscr F}_0$ in Proposition 2.17 of \cite{DHW} shows that
the proposition would follow if for sufficiently large $t$ one can show that
$$ \xi - \frac{1}{n} \tr \, L = -\dot{u} + \left(\frac{n-1}{n}\right) \,\tr \, L \geq 0.$$

We first note that $\tr \, L$ is eventually bounded below by $-\sqrt{\epsilon n/2}$.
Otherwise at some $t=t_1 > 0$, $\tr \, L \leq -\sqrt{\epsilon n/2}$ and (\ref{mcder}) shows
that this inequality continues to hold from $t_1$ onwards. But this would imply that
the soliton has finite volume, contradicting Proposition \ref{vol}.

We are now done since the next proposition (part (i)) shows that $|\dot{u}| = -\dot{u}$ grows at
least linearly for sufficiently large $t$. In particular, for large enough $t$, ${\mathcal F}_0$
fails to be strictly decreasing iff the shape operator of the hypersurfaces become diagonal.
\end{proof}

\begin{prop} \label{expander-gradlb}
Let $(M, \bar{g}, u)$ be a complete, non-Einstein, expanding gradient Ricci soliton
of cohomogeneity one with a special orbit. Suppose $t_1 > 2 \sqrt{\frac{5}{\epsilon}}$ and on $[t_1, +\infty)$ we have an
upper bound $\lambda_0 > 0$ for $\tr \, L$. Set $a:=\lambda_0 + \sqrt{-C}$. Then on $[t_1, +\infty)$
we have

\begin{itemize}
\item[$($i$)$]  $|\overline{\nabla} u| = -\dot{u}(t) >
      \frac{9}{10}\left(\frac{-\dot{u}(t_1)}{\frac{\epsilon}{2} t_1 + a} \right)
      \left(\frac{\epsilon}{2} t + a \right)$, \\

\item[$($ii$)$]   $\ddot{u} + \frac{\epsilon}{2} = -{\rm Ric}_{\bar{g}}(\frac{\partial}{\partial t},
         \frac{\partial}{\partial t}) \leq
         \frac{\epsilon}{2}\left( 1 +  \frac{9}{10} \frac{\dot{u}(t_1)}{\frac{\epsilon}{2} t_1 + a}   \right).$
\end{itemize}
\end{prop}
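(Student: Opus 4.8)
The plan is to reduce both conclusions to a single lower bound on $-\ddot u$ and to propagate that bound by a continuity argument. Write $w:=-\dot u>0$, put $P(t):=\frac{\epsilon}{2}t+a$, and set $c:=\frac{9}{10}\,\frac{w(t_1)}{P(t_1)}$. Since $u$ depends only on $t$, the normal component of the soliton equation is exactly $\ddot u+\frac{\epsilon}{2}=-\ric_{\bar g}(\frac{\partial}{\partial t},\frac{\partial}{\partial t})$, and substituting $\dot u(t_1)=-w(t_1)$ shows that part (ii) is the assertion $\dot w\ge \frac{\epsilon}{2}c$ on $[t_1,\infty)$, while part (i) is $w>cP$. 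These are linked: one has $w(t_1)=\frac{10}{9}cP(t_1)>cP(t_1)$ and $\frac{d}{dt}(cP)=\frac{\epsilon}{2}c$, so the derivative bound forces $w-cP$ to be nondecreasing and hence to stay positive. This is why a bound on $\dot w$ is the key quantity, and I would prove the two estimates together.

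The analytic input is the linear equation for $w$. Differentiating the conservation law (\ref{cons1}) as in the proof of Proposition \ref{expander-pot} gives
\[
\ddot w+\xi\,\dot w-\Big(\tfrac{\epsilon}{2}+\tr(L^2)\Big)w=0,
\]
with $\dot w=-\ddot u>0$ by the concavity of Proposition \ref{expander-pot}. On $[t_1,\infty)$ the drift is dominated by $P$: since $\xi=-\dot u+\tr L=w+\tr L$, the estimate (\ref{udot-ineq}) together with $\tr L\le\lambda_0$ and $a=\lambda_0+\sqrt{-C}$ yields $\xi<\frac{\epsilon}{2}t+\sqrt{-C}+\lambda_0=P$. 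I would then put the equation in divergence form, using that $v e^{-u}$ has logarithmic derivative $\xi$, namely $\frac{d}{dt}\big(v e^{-u}\dot w\big)=v e^{-u}\big(\frac{\epsilon}{2}+\tr(L^2)\big)w\ge 0$, and integrate from the special orbit, where the smoothness conditions force $v e^{-u}\dot w\to 0$. This gives the representation $\dot w(t)=\int_0^t e^{-\int_\tau^t\xi}\big(\frac{\epsilon}{2}+\tr(L^2)\big)w\,d\tau$.

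The core is a bootstrap on the open, nonempty set where $w>cP$. On an interval $[t_1,T]$ with $w\ge cP$ I split the representation at $t_1$ (the lower part equals $e^{-\int_{t_1}^t\xi}\dot w(t_1)$), bound the tail integrand below using $\frac{\epsilon}{2}+\tr(L^2)\ge\frac{\epsilon}{2}$, $w\ge cP$ and $\xi<P$, and integrate the resulting total derivative $\frac{d}{d\tau}e^{-\int_\tau^t\xi}=\xi\,e^{-\int_\tau^t\xi}$ explicitly. This produces
\[
\dot w(t)\ \ge\ \tfrac{\epsilon}{2}c+e^{-\int_{t_1}^t\xi}\Big(\dot w(t_1)-\tfrac{\epsilon}{2}c\Big),
\]
so, provided the single seed inequality $\dot w(t_1)\ge\frac{\epsilon}{2}c$ holds, one gets $\dot w\ge\frac{\epsilon}{2}c$ on $[t_1,T]$. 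Then $w-cP$ is nondecreasing there and stays $\ge w(t_1)-cP(t_1)=\frac{1}{10}w(t_1)>0$; a maximal-interval/continuity argument closes the bootstrap and yields $w>cP$ and $\dot w\ge\frac{\epsilon}{2}c$ on all of $[t_1,\infty)$, which are precisely (i) and (ii).

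The one step that is not formal, and which I expect to be the main obstacle, is the seed $\dot w(t_1)\ge\frac{\epsilon}{2}c$, i.e. $-\ddot u(t_1)\ge\frac{9\epsilon}{20}\,\frac{-\dot u(t_1)}{P(t_1)}$. It cannot come from the pointwise conservation laws, which only give $\dot w=-\E-\xi w$ with no quantitative slack, so it must be extracted from the behaviour of $w$ on all of $[0,t_1]$, using the global bounds (\ref{u-ineq}) and (\ref{udot-ineq}) and the normalization $v e^{-u}\dot w\to 0$ at $t=0$. Since $P(t_1)>\frac{\epsilon}{2}t_1$, it suffices to show the initial elasticity $\frac{t_1\dot w(t_1)}{w(t_1)}\ge\frac{9}{10}$, and this is exactly where the hypothesis $t_1>2\sqrt{5/\epsilon}$ (equivalently $\frac{\epsilon}{2}t_1^2>10$, i.e. $P(t_1)>\sqrt{5\epsilon}$) enters, calibrating the constant $\frac{9}{10}$. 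I would carry out this estimate last, once the bootstrap has fixed the roles of all the constants.
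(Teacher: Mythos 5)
Your reduction of (i) to a derivative bound, the divergence-form representation $\dot w(t)=\int_0^t e^{-\int_\tau^t\xi}\bigl(\frac{\epsilon}{2}+\tr(L^2)\bigr)\,w\,d\tau$, and the bootstrap inequality $\dot w(t)\ge\frac{\epsilon}{2}c+e^{-\int_{t_1}^t\xi}\bigl(\dot w(t_1)-\frac{\epsilon}{2}c\bigr)$ are all correct, but the proof is incomplete at exactly the point you flag, and the gap is not a deferred computation: the seed $\dot w(t_1)\ge\frac{\epsilon}{2}c$, i.e. $-\ddot u(t_1)\ge\frac{9\epsilon}{20}\,\frac{-\dot u(t_1)}{\frac{\epsilon}{2}t_1+a}$, is precisely conclusion (ii) of the proposition evaluated at $t=t_1$, so your continuity argument assumes at one point the strongest instance of what is to be proved. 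Moreover, the route you sketch for extracting it from $[0,t_1]$ cannot work with the stated hypotheses. The kernel $e^{-\int_\tau^{t_1}\xi}$ on $(0,t_1)$ is uncontrolled: the bound $\tr L\le\lambda_0$ is assumed only on $[t_1,\infty)$, and $\xi$ blows up like $k/t$ as $t\to 0$, so the weight can crush the integral; also (\ref{u-ineq}) and (\ref{udot-ineq}) give only \emph{upper} bounds on $-u$ and $|\dot u|$, whereas the seed needs a \emph{lower} bound on $\dot w(t_1)$ relative to $w(t_1)$. Your sufficient condition, the elasticity bound $t_1\dot w(t_1)/w(t_1)\ge\frac{9}{10}$, is likewise unavailable: $w$ vanishes at $t=0$ and increases, but the only monotone quantity in hand, $ve^{-u}\dot w$, degenerates at $t=0$ since $ve^{-u}\to 0$, and nothing in the hypotheses prevents $\dot w(t_1)$ from being momentarily small (the equation gives $\ddot w(t_1)\ge\frac{\epsilon}{2}w(t_1)-\xi(t_1)\dot w(t_1)$, which corrects a small $\dot w$ only \emph{after} $t_1$). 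You have also mislocated the hypothesis $t_1>2\sqrt{5/\epsilon}$: it does not calibrate any seed at $t_1$.

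The paper's proof avoids needing derivative information at $t_1$ altogether. From the same differential inequality $\ddot y+\bigl(\frac{\epsilon}{2}t+a\bigr)\dot y-\frac{\epsilon}{2}y<0$ for $y=\dot u$, it invokes the comparison theorem of \cite{PrW} (Theorem 13, p.\ 26) against the equation $\ddot x+\bigl(\frac{\epsilon}{2}t+a\bigr)\dot x-\frac{\epsilon}{2}x=0$, which is explicitly solvable; admissibility requires only $x(t_1)\ge y(t_1)$ and $\dot x(t_1)\ge\dot y(t_1)$, and since $\ddot u(t_1)<0$ by Proposition \ref{expander-pot} one may take $x(t_1)=\dot u(t_1)$ and $\dot x(t_1)=0$ (the choice $c_1=\frac{\epsilon}{2}c_0$) --- no quantitative lower bound on $-\ddot u(t_1)$ is needed, because the barrier itself generates the linear growth: the zeroth-order term $-\frac{\epsilon}{2}x$ forces $-x$ to grow even from zero initial slope. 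The constant $\frac{9}{10}$ and the hypothesis $t_1>2\sqrt{5/\epsilon}$ enter only at the end, in estimating the integral $\int_{t_1}^t e^{-\frac{\epsilon}{4}\tau^2-a\tau}\bigl(\frac{\epsilon}{2}\tau+a\bigr)^{-2}d\tau$ by three integrations by parts, using $\frac{\epsilon}{2}t_1+a>\sqrt{5\epsilon}$ to get $1-\frac{\epsilon}{2}\bigl(\frac{\epsilon}{2}t_1+a\bigr)^{-2}>\frac{9}{10}$; part (ii) then follows from the companion inequality $\dot y(t)\le\dot x(t)$. To repair your argument you should replace the pointwise seed by such a barrier: comparison transfers the zeroth-order datum $w(t_1)$ forward, which is what your bootstrap, seeded by first-order data at $t_1$, cannot supply.
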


\begin{proof} By assumption and the upper bound (\ref{udot-ineq}) we have
$\xi < \frac{\epsilon}{2} t + a$.  Since $\dot{y} = \ddot{u} < 0$
and $y=\dot{u} < 0$ by Proposition \ref{expander-pot},  we see that $y$ satisfies the
differential inequality
$$ \ddot{y} + \left(\frac{\epsilon}{2}t + a \right)\, \dot{y} - \frac{\epsilon}{2} \,y < 0.$$

We will now compare $y$ with solutions of the corresponding equation
\begin{equation} \label{compeqn}
\ddot{x} + \left(\frac{\epsilon}{2}t + a \right)\, \dot{x} - \frac{\epsilon}{2} \,x = 0,
\end{equation}
which can be solved explicitly. This is because if we differentiate this equation, we obtain
$$ \frac{d^3 x}{dt^3} + \left(\frac{\epsilon}{2} t + a  \right)  \ddot{x} = 0,  $$
from which we can solve for $\ddot{x}$. Accordingly, upon integration
and using (\ref{compeqn}) we obtain
\begin{equation}  \label{soln-compeqn}
    x(t) = -\left(\frac{\epsilon}{2} t + a \right)\left( \frac{c_0}{\frac{\epsilon}{2} t_1 + a} -
     c_1 e^{\frac{\epsilon}{4}t_1^2 + at_1} \int_{t_1}^t \, \frac{e^{-\frac{\epsilon}{4}\tau^2 -a \tau} }{(\frac{\epsilon}{2} \tau +a)^2} \, d\tau   \right)
\end{equation}
where $c_0$ and $c_1$ are arbitrary constants.

In order to apply Theorem 13 on p. 26 of \cite{PrW}, we must choose $x(t_1) \geq y(t_1) = \dot{u}(t_1)$
and $\dot{x}(t_1) \geq \dot{y}(t_1) = \ddot{u}(t_1)$. Since $x(t_1) = -c_0$, we can maximize
$c_0$ by choosing $x(t_1) = \dot{u}(t_1)$. It follows that
$$ c_1 = -\ddot{x}(t_1) = -\frac{\epsilon}{2}\, x(t_1) +
     \left(\frac{\epsilon}{2} t_1 + a  \right)  \dot{x}(t_1) \geq -\frac{\epsilon}{2}\, \dot{u}(t_1) +
     \left(\frac{\epsilon}{2} t_1 + a  \right)  \ddot{u}(t_1).$$
In particular, an admissible choice for $c_1$ is $c_1 = \frac{\epsilon}{2}c_0 > 0$. With this choice, it remains
to find an upper bound for the integral in (\ref{soln-compeqn}).

To do this, we integrate by parts three times and then throw away the resulting term
involving integration (this term is negative). Specifically, we have
$$ \int_{\lambda_1}^{\lambda} \, \frac{e^{-\sigma^2/\epsilon}}{\sigma^2} \, d\sigma \leq
     \frac{\epsilon}{2}\left( \frac{e^{-\lambda_1^2/\epsilon}}{\lambda_1^3}\right)
     \left(1 - \frac{3}{2} \frac{\epsilon}{\lambda_1^2} + \frac{15}{\lambda_1^4} \left(\frac{\epsilon}{2} \right)^2
     - \left( \frac{\lambda_1}{\lambda}\right)^3 e^{-(\lambda^2 - \lambda_1^2)/\epsilon}
    \left(1 - \frac{3}{2} \frac{\epsilon}{\lambda^2} + \frac{15}{\lambda^4} \left(\frac{\epsilon}{2} \right)^2 \right)
     \right). $$
Using the change of independent variable $\lambda := \frac{\epsilon}{2} t +a$ and the fact that
$$ 1 - \frac{3\epsilon}{2} x + \frac{15}{4}\, \epsilon^2 x^2 =
     \left(1 - \frac{3\epsilon}{4}\, x \right)^2 + \frac{51}{16}\, \epsilon^2 x^2  \geq \frac{17}{20},$$
we obtain
\begin{eqnarray*}
    \lefteqn{ e^{\frac{\epsilon}{4}t_1^2 + at_1}
    \int_{t_1}^t \, \frac{e^{-\frac{\epsilon}{4}\tau^2-a \tau}}{(\frac{\epsilon}{2} \tau + a)^2}\, d\tau
         \leq  }  \\
  &    &   \hspace{1cm} \frac{1}{(\frac{\epsilon}{2} t_1 + a)^3}
               \left( 1 - \frac{\epsilon}{2} \frac{3}{(\frac{\epsilon}{2} t_1 +a)^2}
        + \left(\frac{\epsilon}{2} \right)^2 \frac{15}{(\frac{\epsilon}{2} t_1 + a)^4} -
        \frac{17}{20} \left(\frac{\frac{\epsilon}{2} t_1 + a}{\frac{\epsilon}{2} t + a} \right)^3
         \frac{e^{\frac{\epsilon}{4} t_1^2 + a t_1}  }{e^{ \frac{\epsilon}{4} t^2 + a t  } } \right).
\end{eqnarray*}

If we substitute the above information together with the choice $c_1 = \frac{\epsilon}{2}c_0$ in
the comparison inequality $\dot{u}(t) \leq x(t)$ (for $t \geq t_1$), we obtain
\begin{eqnarray*}
-\dot{u}(t) &  \geq & -\frac{\dot{u}(t_1)}{\frac{\epsilon}{2} t_1 + a} \left( \frac{\epsilon}{2} t + a \right)
           \left( 1 - \frac{\epsilon}{2} \frac{1}{ (\frac{\epsilon}{2} t_1 + a)^2 }
              \left( 1 - \frac{\epsilon}{2} \frac{3}{(\frac{\epsilon}{2} t_1 +a)^2}
        + \left(\frac{\epsilon}{2} \right)^2 \frac{15}{(\frac{\epsilon}{2} t_1 + a)^4} \right) \right)  \\
    & \geq &   -\frac{\dot{u}(t_1)}{\frac{\epsilon}{2} t_1 + a} \left( \frac{\epsilon}{2} t + a \right)
                 \left(  1 - \frac{\epsilon}{2} \frac{1}{ (\frac{\epsilon}{2} t_1 + a)^2 } \right) \\
      & > & \frac{9}{10} \left(  -\frac{\dot{u}(t_1)}{\frac{\epsilon}{2} t_1 + a}\right)
               \left( \frac{\epsilon}{2} t + a \right)
\end{eqnarray*}
where for the last inequality we used the hypothesis that $t_1 > 2 \sqrt{\frac{5}{\epsilon}}$, so that
$\frac{\epsilon}{2} t_1 + a > \sqrt{5 \epsilon}$. This completes the proof of (i).

The proof of (ii) follows by applying the same estimates to the comparison inequality
$\ddot{u}(t)=\dot{y}(t) \leq \dot{x}(t)$ for $t \geq t_1$. Note that by (2.2) in \cite{DW4}
and (\ref{SS}), the quantity $\ddot{u} + \frac{\epsilon}{2}$ is precisely the negative
of the Ricci curvature of the soliton metric in the direction $\frac{\partial}{\partial t}$.
\end{proof}

\begin{rmk}
In the above proof we can of course take $\lambda_0$ to be $\sqrt{\epsilon n/2}$ by
Proposition \ref{MC}. Notice, however, that in part (ii) of the proof of Proposition
\ref{MC} one automatically has an upper bound on $\tr \, L$. So one can apply Proposition
\ref{expander-gradlb} instead of Theorem 11 of \cite{PRS} to obtain a self-contained
proof for Proposition \ref{MC}.

Note also that both Propositions \ref{MC} and \ref{expander-gradlb} do not require
any curvature bounds.
\end{rmk}

We end this section with a simple generalization of Proposition \ref{vol} which, as far
as we know, has not been explicitly observed in the literature. An analogous result for
steady gradient Ricci solitons is Theorem 5.1 in \cite{MS}.

\begin{prop} \label{logvol}
A complete non-Einstein expanding gradient Ricci soliton has at least logarithmic
volume growth.
\end{prop}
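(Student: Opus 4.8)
The plan is to run the argument of Proposition \ref{vol} essentially verbatim, replacing the tubes $M_t$ by geodesic balls, so that no cohomogeneity one (or warped product) structure is needed. Write $m = \dim M$ and fix a point $p \in M$. Tracing the soliton equation (\ref{gradRS}) gives $\bar{R} + \bar{\triangle} u + \frac{\epsilon}{2} m = 0$, and the lower bound of B.~L.~Chen \cite{Chb} together with the strong maximum principle (exactly as quoted before (\ref{u-ineq}), now with $n+1$ replaced by $m = \dim M$) shows that for a non-Einstein expander $\bar{R} + \frac{\epsilon}{2} m > 0$ everywhere, i.e.\ $\bar{\triangle} u < 0$. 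Consequently the function
\[
 f(r) := \int_{B(p,r)} \Big( \bar{R} + \tfrac{\epsilon}{2} m \Big) \, d\mu_{\bar{g}}
       = - \int_{B(p,r)} \bar{\triangle} u \, d\mu_{\bar{g}}
\]
is positive and nondecreasing in $r$; I would fix some $r_0 > 0$ and set $b := f(r_0) > 0$.

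Next I would apply the divergence theorem on $B(p,r)$ together with Cauchy--Schwartz to obtain, for $r \ge r_0$,
\[
 0 < b \le f(r) = - \int_{\partial B(p,r)} \langle \overline{\nabla} u, \nu \rangle \, dA
   \le \int_{\partial B(p,r)} |\overline{\nabla} u| \, dA,
\]
where $\nu$ is the outward unit normal. The role played by (\ref{udot-ineq}) in Proposition \ref{vol} is now played by the general gradient estimate of Z.-H.~Zhang \cite{Zh}, of which (\ref{u-ineq}) and (\ref{udot-ineq}) are the cohomogeneity one specializations: after normalizing the additive constant in $u$, one has a bound of the form $|\overline{\nabla} u|(x) \le \frac{\epsilon}{2} d(x,p) + c$ for a constant $c$. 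Since $d(x,p) = r$ on $\partial B(p,r)$, this yields $b \le ( \frac{\epsilon}{2} r + c )\, A(r)$, where $A(r) := \mathrm{area}( \partial B(p,r) )$, and hence $A(r) \ge b\,( \frac{\epsilon}{2} r + c )^{-1}$.

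Finally, integrating this in $r$ via the coarea formula gives
\[
 \mathrm{vol}\big( B(p,r) \big) = \int_0^r A(\rho) \, d\rho
   \ge \frac{2b}{\epsilon} \, \log \frac{\frac{\epsilon}{2} r + c}{\frac{\epsilon}{2} r_0 + c},
\]
which is the desired (at least) logarithmic growth. The conceptual content is identical to Proposition \ref{vol}; the one place needing care is the divergence-theorem step, since geodesic spheres are only Lipschitz away from the cut locus of $p$. I expect this to be the main, though purely technical, obstacle: it is handled by noting that the cut locus has measure zero and that $\partial B(p,r)$ is a smooth hypersurface for almost every $r$ (equivalently, by approximating $B(p,r)$ from inside by smooth domains), and by invoking the global form of Zhang's estimate in place of its cohomogeneity one avatar.
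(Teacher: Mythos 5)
Your proposal is correct in substance and is essentially the paper's own proof: trace the soliton equation, use B.\ L.\ Chen's lower bound $\bar{R} > -\frac{\epsilon}{2}m$ together with the strong maximum principle to make the integrand strictly positive, apply the divergence theorem, bound the boundary term by Zhang's global gradient estimate $|\overline{\nabla} u| \leq \frac{\epsilon}{2}\, d(x,p) + c$, and integrate via the coarea formula to get the logarithm. The only divergence is in the technical step you yourself flag, and there your two proposed fixes are \emph{not} equivalent, contrary to your parenthetical. The claim that $\partial B(p,r)$ is a smooth hypersurface for almost every $r$ is false in general: the distance function is merely Lipschitz (semiconcave off the cut locus), so Sard's theorem does not apply to it, and beyond the injectivity radius a geodesic sphere can meet the cut locus for every radius. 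Your other suggestion---approximating from inside by smooth domains---is precisely what the paper does: it invokes Yau's formulation of Gaffney's approximation argument, which supplies for each $r$ a smooth function $\varphi_r$ with $|d\varphi_r| \leq \frac{3}{2}$, with $\varphi_r^{-1}(t) \subset B_p(t+1)\setminus B_p(t-1)$, and with $\varphi_r^{-1}(t)$ a compact regular hypersurface for all but finitely many $t < r$ (Sard now applying legitimately to the smooth $\varphi_r$); Stokes' theorem is then applied to the sublevel sets $\varphi_r^{-1}([0,t])$, at the harmless cost of the factor $\frac{3}{2}$ in the coarea step and a shift from $B_p(r)$ to $B_p(r+1)$ in the conclusion. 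So your argument stands once the parenthetical is deleted and the Gaffney--Yau exhaustion is cited explicitly; the paper also first passes to the orientation double cover before applying Stokes, a minor hygiene step you omit.
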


\begin{proof} The basic idea is the same as that for the cohomogeneity one case.
Technically, we employ a formulation of the approximation arguments of Gaffney \cite{Gaf}
given by Yau in \cite{Y} (p. 660) which provides a compact exhaustion of the underlying
manifold with good properties for applying Stoke's theorem.

Let $(M, g, u)$ denote our non-Einstein expander, which is necessarily non-compact.
By going to the orientation double cover we may assume without loss of generality
that $M$ is orientable. Let us fix a point $p \in M$ and denote by $r(x)$ the distance
function from $p$, which is in general only Lipschitz continuous. Then for any value
$r>0$, there exists a smooth positive function $\varphi_r$ on $M$  such that
\begin{enumerate}
\item[(a)] except for finitely many $t < r$, $\varphi_r^{-1}(t)$ is a compact
    regular hypersurface in $M$
\item[(b)] $|d \varphi_r | \leq \frac{3}{2}$ on  $\varphi_r^{-1}([0,r])$
\item[(c)] for all $t \leq r$, $\varphi_r^{-1}(t) \subset B_p (t+1) \setminus B_p(t-1)$
\end{enumerate}
where $B_p(t)$ denotes the metric ball with centre $p$ and radius $t$.

We now consider the analogous function
$$ {\tilde f}(t) := \int_{B_p(t)} \, (R + \frac{\epsilon}{2} n)\, d\mu_g$$
where $R$ is the scalar curvature of $g$ and $n$ is the dimension of $M$. Note that
${\tilde f}$ is a non-decreasing function in $t$ and since the soliton is non-Einstein,
it follows from \cite{Chb} and the strong maximum principle that the integrand is strictly
positive, so that ${\tilde f}(t) > 0$ for $t>0$.

Let us choose $r \geq 3$ and $t$ to be between $2$ and $r$ such that $\varphi_r^{-1}(t)$ is a
closed regular hypersurface. If $M_t := \varphi_r^{-1}([0,t])$, then $M_t \subset B_p(r+1)$
and $\partial M_t = \varphi_r^{-1}(t)$. As in the proof of Prop \ref{vol} we have
$$ 0 < {\tilde b} := \int_{B_p(1)} \, (R + \frac{\epsilon}{2} n)\,  d\mu_g
     \leq \int_{M_t} (R + \frac{\epsilon}{2} n)\,  d\mu_g = -\int_{M_t} \Delta u \, d\mu_g,$$
where we have used the trace of the soliton equation. By Stoke's theorem, the last integral equals
$$    -\int_{\varphi_r^{-1}(t)} \, \nabla u \cdot \nu \, d\sigma_t $$
where $\nu$ denotes the unit outward normal along $\varphi_r^{-1}(t)$. It follows from
\cite{Zh} that the integrand can be bounded by ${\tilde c}\, (t+2)\, {\rm vol}_{n-1}(\varphi_r^{-1}(t))$
where $\tilde{c}$ is a positive constant which depends only on $n$ and $\epsilon$.
Therefore, except for a finite number of values of $t$, $2 \leq t \leq r$, we have
$$ \frac{{\tilde b}}{{\tilde c}(t+2)} \leq  {\rm vol}_{n-1}(\varphi_r^{-1}(t)).$$
Integrating this inequality from $2$ to $t$ and using the coarea formula together
with property (b) above, we obtain
$$ \int_2^t \,\frac{{\tilde b}}{{\tilde c}(\tau+2)}\, d\tau \leq \frac{3}{2}\, {\rm vol}_n (M_t)
        \leq \frac{3}{2}\, {\rm vol}_n (B_p(r+1)).$$
It follows that except for a finite number of $t$, $2 \leq t \leq r$,
we have ${\rm vol}_n (B_p(r+1)) \geq  \frac{2{\tilde b}}{3{\tilde c}} \log (1 + \frac{t}{2})$,
which yields for all $r$ ($r \geq 3$)
$${\rm vol}_n (B_p(r+1)) \geq  \frac{2{\tilde b}}{3{\tilde c}} \log (1 + \frac{r}{2}).$$
\end{proof}

\begin{rmk} \label{volume}
Of course there are non-compact negative Einstein manifolds with finite volume.
It is quite probable though that for non-trivial expanders the above volume lower bound is
not sharp. Most lower bounds for the volume in the literature involve additional assumptions
on the curvature. For example, in Proposition 5.1(b) of \cite{CaNi} or Theorem 1 of \cite{Chc}
a lower bound on the (average) scalar curvature is assumed.
\end{rmk}

\section{\bf Multiple warped product expanders}

In this section, we specialise to multiple warped products, that is
metrics of the form
\begin{equation} \label{metric}
{\bar{g}} = dt^2 + \sum_{i=1}^{r} g_i^2(t)\,  h_i
\end{equation}
on $I \times M_1 \times \cdots \times M_r$\, where $I$ is an interval in
$\mathbb R,$ $r \geq 2,$ and $(M_i, h_i)$ are Einstein manifolds
with real dimensions $d_i$ and Einstein constants $\lambda_i$. We observe
that
$n=\sum_i d_i$ is greater than or equal to $3$ as long as
 some $M_i$ is non-flat.

The Ricci endomorphism is now diagonal with components
given by blocks $\frac{\lambda_i}{g_i^2} \I_{d_i}$, where $i=1,\ldots, r$
and $\I_m$ denotes the identity matrix of size $m$.
We work with the variables
\begin{eqnarray}
X_i &=& \frac{\sqrt{d_i}}{\xi} \frac{\dot{g_i}}{g_i} \label{def-Xi}\\
Y_i &=& \frac{\sqrt{d_i}}{\xi} \frac{1}{ g_i} \label{def-Yi}\\
W   &=&  \frac{1}{\xi} := \frac{1}{-\dot{u} + {\rm tr} \; L} \label{def-W}
\end{eqnarray}
for $i=1, \ldots, r$.
The definition of $Y_i$ in \cite{DW2} and \cite{DW3} differs from that
above by a scale factor of $\sqrt{\lambda_i}$. This choice reflects the fact
that we are now allowing one of the $\lambda_i$ to be zero.
As in \cite{BDGW} we have
\[
\sum_{j=1}^{r} \, X_j^2  = \frac{{\rm tr} (L^2)}{\xi^2} \quad\textup{and}\quad
\sum_{j=1}^{r} \, \lambda_j Y_j^2 = \frac{ {\rm tr} (r_t)}{\xi^2}.
\]

As mentioned earlier, we shall
introduce the new independent variable $s$ defined by
(\ref{st}) and use  a prime ${ }^{\prime}$ to denote differentiation with
respect to $s$.

In these new variables the Ricci soliton system (\ref{TT})-(\ref{SS})
becomes
\begin{eqnarray}
X_{i}^{\prime} &=& X_i \left( \sum_{j=1}^{r} X_j^2 -1 \right) +
\frac{\lambda_i }{\sqrt{d_i}}\,\, Y_i^2 +
\frac{\epsilon}{2} (\sqrt{d_i} - X_i) W^2\, , \label{eqnX} \\
Y_{i}^{\prime} &=&  Y_i \left( \sum_{j=1}^{r} X_j^2 -\frac{X_i}{\sqrt{d_i}}
-\frac{\epsilon}{2} W^2 \right) \label{eqnY} \\
W^\prime &=& W \, \left( \sum_{j=1}^{r} X_j^2 - \frac{\epsilon}{2} W^2 \right) \label{eqnW}
\end{eqnarray}
for $i=1, \ldots, r$. Note that in the warped product situation, equation (\ref{TS})
is automatically satisfied.

As in \cite{BDGW} we use  ${\mathcal G}$ to denote $\sum_{i=1}^{r} X_i^2$.
The quantity ${\mathcal H} = W \, {\rm tr} \; L$ becomes
$\sum_{i=1}^{r} \sqrt{d_i} X_i$ in our new variables.
We further have the equation
\[
({\mathcal H} - 1)^\prime = ({\mathcal H} -1 )({\mathcal G}-1 -\frac{\epsilon}{2}
W^2 ) + \mathcal Q
\]
where
\begin{equation}  \label{modEng}
{\mathcal Q} = \sum_{i=1}^{r} \,(X_i^2 + \lambda_i Y_i^2) +
\frac{\epsilon(n-1)}{2} \, W^2 -1.
\end{equation}
As explained in \cite{DW3}, $\mathcal Q$ serves as an energy functional
in the expanding case, modifying the Lyapunov
\begin{equation} \label{Lyap}
{\mathcal L} :=  \sum_{i=1}^{r} \, (X_i^2 + \lambda_i Y_i^2)  - 1
\end{equation}
that plays a key role in the steady case (cf \cite{DW2},\cite{BDGW}).
The general conservation law (\ref{cons2}) then becomes
${\mathcal Q} = (C + \epsilon u)\, W^2.$

Note that in our situation, the quantity $\mathcal Q$ is no longer
a Lyapunov. However, we do have the equations
\begin{eqnarray*}
({\mathcal H} -1)^\prime &=& f_1 ({\mathcal H}-1) + f_2 {\mathcal Q} \\
{\mathcal Q}^\prime      &=& f_3 ({\mathcal H}-1) + f_4 {\mathcal Q}
\end{eqnarray*}
where $f_1 = G-1 -\frac{\epsilon}{2} W^2, f_2 = 1, f_3 = \epsilon W^2,$
and $f_4 = 2(G -\frac{\epsilon}{2} W^2)$. The crucial
point for us is that in the expanding case both $f_2$ and $f_3$
are positive, so the phase plane diagram in the
(${\mathcal H}-1, {\mathcal Q}$)-plane shows that the regions
$\{{\mathcal H} < 1, {\mathcal Q} < 0 \}$ and
$\{ {\mathcal H} > 1, {\mathcal Q} >0 \}$ are both flow-invariant.
Furthermore, the region $\{ {\mathcal Q} =0, {\mathcal H} =1 \}$ of
phase space corresponds to Einstein metrics of negative Einstein
constant and is of course also flow-invariant.

The above observations are in fact valid for the {\em general}
monotypic cohomogeneity one expanding soliton equations, not just
for the warped product case,
provided we make the general definition
\[
{\mathcal Q} := W^2 \E = W^2 (C + \epsilon u) \;\; \mbox{ and} \;\;
{\mathcal H} := W \,{\rm tr} \; L.
\]
(The conservation law shows that this is consistent with the
earlier formula for $\mathcal Q$ that we gave in the warped product case
(cf equation (4.6) in \cite{DHW})).
We refer to \cite{DHW} for a discussion of this topic
as well as the qualitatively different situation of shrinking solitons, where
$\epsilon$ is negative. However, apart from the multiple warped product
case, these formulae for ${\mathcal Q}$ involve polynomial or rational expressions
in the $X_i$ and $Y_i$ variables which need not be definite, so the estimates
obtained are not coercive.

In the warped product case with all $\lambda_i$ positive, which was
the situation examined in \cite{DW3}, $\mathcal Q$ is, as explained
above, a positive definite form (up to an additive constant) in the
$X_i,Y_i$, so we obtained coercive estimates which allowed us to
analyse the flow. For the rest of this section, we shall look at the
case where the collapsing factor $M_1$ is $S^1$, so $d_1=1$,
$\lambda_1=0,$ and the remaining Einstein constants $\lambda_i$ are
positive.  Then the equation for $X_1$ becomes
\[
X_{1}^{\prime} = X_1 \left( \sum_{j=1}^{r} X_j^2 -1 \right) +
\frac{\epsilon}{2} ( 1 - X_1) W^2.
\]
As $\mathcal Q$ now does not include a $Y_1$ term, the region
${\mathcal Q} < 0$ is no longer precompact. However, we will
see by using similar ideas as those in \cite{BDGW} that we can still
analyse the flow.

It is clear that we can recover $t$ and  $g_i$ from a solution
$X,Y,W$ of the above system via the relation $dt = W \; ds$ and the
formulae (\ref{def-Xi}), (\ref{def-Yi}), (\ref{def-W}).
As usual we choose $t=0$ to correspond to $s = -\infty$.
The soliton potential $u$ is recovered from integrating
\begin{equation} \label{def-u}
 \dot{u} = \tr(L) - \frac{1}{W} =
\frac{{\mathcal H} - 1}{W} = \frac{\sum_{i=1}^{r} \sqrt{d_i} X_i -1}{W}.
\end{equation}

We next compute the critical points of the soliton system (\ref{eqnX})-(\ref{eqnW}).

\begin{lemma} \label{zeros}
Let $d_1 =1$ and $d_i > 1$ for $i > 1$, so that
$\lambda_i =0$ iff $i=1$. The stationary points of $($\ref{eqnX}$)$, $($\ref{eqnY}$)$,
$($\ref{eqnW}$)$ in $X,Y,W$-space consist of
\begin{enumerate}
\item[$($i$)$] the origin

\item[$($ii$)$] points with $W=0$, $Y_i=0$ for all $i$, and $\sum_{i=1}^{r} X_i^2 =1$

\item[$($iii$)$] points given by
\[
W=0 \;\; : \;\; X_i = \sqrt{d_i}\, \rho_A \;\; : \;\; Y_i^2 = \frac{d_i}{\lambda_i}\,\, \rho_A (1- \rho_A),
\;\;\;  i \in A
\]
and $X_i = Y_i =0$ for $i \notin A$, where $A$ is any nonempty subset
of $\{2, \ldots, r \}$, \\ and $\rho_A = \left( \sum_{j \in A} d_j \right)^{-1}$

\item[$($iv$)$] the line where $W=0$, $X_i=0$ for all $i$, and $Y_i=0$ for $i >1$

\item[$($v$)$] the line where $W=0$, $X_1 =1$, and $X_i, Y_i =0$ for $i > 1$.

\item[$($vi$)$] the points $E_{\pm}$ with coordinates
\[
X_i = \frac{\sqrt{d_i}}{n} \;\; : \;\; Y_i =0 \;\; : \;\; W =
\pm \sqrt{\frac{2}{n \epsilon}}\, .
\]     \qed
\end{enumerate}
\end{lemma}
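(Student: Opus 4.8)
The plan is to set each right-hand side of (\ref{eqnX})--(\ref{eqnW}) equal to zero and solve the resulting algebraic system, organizing the casework according to the two factors appearing in (\ref{eqnW}) and to the distinguished behaviour of the index $i=1$. Throughout I write $\G = \sum_{j=1}^r X_j^2$. Since $W'=0$ reads $W(\G - \frac{\epsilon}{2}W^2) = 0$, the first dichotomy is $W \neq 0$ versus $W = 0$.

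First I would dispose of the case $W \neq 0$, which should yield exactly the points $E_{\pm}$ of (vi). Here $\G = \frac{\epsilon}{2}W^2$, and substituting this into (\ref{eqnY}) collapses the bracket to $-X_i/\sqrt{d_i}$, forcing $X_i Y_i = 0$ for every $i$. Feeding $\frac{\epsilon}{2}W^2 = \G$ into (\ref{eqnX}) and simplifying gives $X_i = \sqrt{d_i}\,\G + \frac{\lambda_i}{\sqrt{d_i}}Y_i^2$. Since $W \neq 0$ forces $\G > 0$, the right-hand side is strictly positive, so $X_i \neq 0$; hence $Y_i = 0$ and $X_i = \sqrt{d_i}\,\G$ for all $i$. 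Summing squares gives $\G = n\G^2$, whence $\G = 1/n$, $X_i = \sqrt{d_i}/n$, and $W^2 = 2/(n\epsilon)$, exactly $E_{\pm}$.

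The remaining work is the case $W = 0$, where the system reduces to $X_i(\G-1) + \frac{\lambda_i}{\sqrt{d_i}}Y_i^2 = 0$ and $Y_i(\G - X_i/\sqrt{d_i}) = 0$ for each $i$. Here I would exploit $\lambda_1 = 0$ and $d_1 = 1$: the first equation at $i=1$ becomes $X_1(\G - 1) = 0$, a clean split into $\G = 1$ and $X_1 = 0$. When $\G = 1$, positivity of $\lambda_i$ ($i>1$) forces $Y_i = 0$ for $i>1$, and the $i=1$ version of the second equation, $Y_1(1 - X_1) = 0$, separates into the subcase $Y_1 = 0$ (the sphere $\G = 1$ with all $Y_i = 0$, i.e. (ii)) and the subcase $X_1 = 1$ (forcing $X_i = 0$ for $i>1$ with $Y_1$ free, i.e. the line (v)). When instead $X_1 = 0$ and $\G \neq 1$, I would split on whether $\G = 0$: if $\G = 0$ all $X_i$ vanish and positivity of $\lambda_i$ kills $Y_i$ for $i>1$, leaving $Y_1$ free, which is the line (iv) (the origin (i) being its $Y_1 = 0$ point); if $\G \neq 0$ the second equation forces $Y_1 = 0$ and, setting $A = \{i > 1 : Y_i \neq 0\}$, gives $X_i = \sqrt{d_i}\,\G$ and $Y_i^2 = \frac{d_i}{\lambda_i}\G(1-\G)$ on $A$ and $X_i = Y_i = 0$ off $A$; summing squares over $A$ yields $\G = \rho_A$ and recovers (iii).

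I expect the only real obstacle to be bookkeeping rather than any genuine difficulty: one must keep the exceptional index $1$ separate from the indices $i > 1$ (where $\lambda_i > 0$ does the work of eliminating $Y_i$), track the degenerate values $\G = 0$ and $\G = 1$ carefully so that the lines (iv) and (v), and their nesting of the origin and the sphere, are correctly accounted for, and check the sign condition $0 < \G < 1$ that makes $Y_i^2 = \frac{d_i}{\lambda_i}\G(1-\G)$ admissible in (iii)---automatic since $\rho_A = (\sum_{j \in A}d_j)^{-1} < 1$ because each $d_j > 1$ for $j > 1$. Verifying that no stationary points are lost or double-counted across these branches is the one place to be attentive.
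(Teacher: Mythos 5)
Your proof is correct and complete: the paper states Lemma \ref{zeros} without proof (hence the \qed in the statement), treating it as a routine verification, and your direct elimination---splitting first on $W=0$ via the $W$-equation, then on $X_1({\mathcal G}-1)=0$ using $\lambda_1=0$, $d_1=1$---is exactly the intended computation. All six families come out right, including the admissibility condition $0<\rho_A<1$ in case (iii) (guaranteed by $d_j>1$ for $j\in A\subseteq\{2,\dots,r\}$) and the observation that the origin sits on the line (iv), consistent with the paper's remark that the origin is no longer an isolated critical point.
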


Note that $\mathcal L$ equals $-1$ in case (i) and (iv), equals $0$ in case
(ii), (iii) and (v), and equals $\frac{1-n}{n}$ in case(vi).
Also $\mathcal Q$ is $-1$ in cases (i) and (iv) and zero otherwise.
   Cases (i)-(v) arose
in \cite{BDGW} in the steady case. Case (vi) is special to the
expanding case and arose in \cite{DW3}. Again the origin is no longer
an isolated critical point.

The analysis of the equations is quite similar to that in \cite{DW3}, with
appropriate changes as in \cite{BDGW} to reflect the fact that one factor
$M_1$ of the product hypersurface is flat. Accordingly we shall be
brief in our discussion.

We look for solutions where the flat factor $M_1 = S^1$ collapses at the end
corresponding to $t=0$ (that is, $s = -\infty$).
In our new variables, this translates into considering trajectories in
the unstable manifold of the critical point $P$ of (\ref{eqnX})-(\ref{eqnW})
(of type (v)) given by
\[
W =0, \;\; X_1 = 1, \;\; Y_1 = 1, \;\; X_i =Y_i=0  \,\, (i > 1).
\]
Note that at this critical point we have
${\mathcal L}= {\mathcal Q} =0$ and $\G = {\mathcal H} =1$.

The linearisation about this critical point is the system
\begin{eqnarray*}
x_1^{\prime} &=& 2x_1 \\
y_1^\prime &=& x_1 \\
x_i^\prime &=& 0 \;\;\;\,\, (i \geq 2) \\
y_i^\prime &=& y_i \;\;\,\, (i \geq 2) \\
w^{\prime} &=& w
\end{eqnarray*}
with eigenvalues
$2$,  $1$ ($r$ times), and $0$ ($r$ times).

\medskip
The results of \cite{Buz} now show we have an $r$-parameter family
of trajectories $\gamma(s)$ emanating from $P$ and pointing into the region
$\{{\mathcal Q} < 0, {\mathcal H} < 1\}$. Moreover, by the arguments above,
such trajectories stay in this region. We can choose the trajectories to have
$W, Y_i$ positive for all time, as the loci $\{Y_i =0 \}$ or $\{W=0 \}$ are flow-invariant
and the equations are invariant under changing the sign of $W$ and/or of
any $Y_i$.

As mentioned above, as $M_1$ is flat and $Y_1$ does not appear in $\mathcal Q$,
the region $\{{\mathcal Q} < 0 \}$ is no longer precompact. However, since the variable
$Y_1$ only enters into the equations through the equation for $Y_1^\prime$,
we may follow \cite{BDGW} and consider the subsystem obtained by
omitting the $i=1$ equation in (\ref{eqnY}). The result is a system of equations
in $W$, $X_i \,(i=1, \ldots, r)$ and $Y_i \,(i=2, \ldots, r)$, and on this
$2r$-dimensional phase space the locus $\{{\mathcal Q} < 0\}$ {\em is} precompact.
Once we have a long-time solution  to the subsystem, $Y_1$ may be recovered via
\[
Y_{1}(s) = Y_{1}(s_0) \exp \left(\int_{s_0}^{s} \sum_{j=1}^{r} X_j^2 -X_1 - \frac{\epsilon}{2} W^2
\right)
\]
where $s_0$ is a fixed but arbitrary constant.

The critical points of the subsystem are obtained by removing
the $Y_1$-coordinate from those of the full system. In particular, the
origin becomes an isolated critical point, and case (v) of Lemma \ref{zeros}
gives rise to the special critical point $\hat{P}$ with
$W=0$, $X_1=1$, $X_i=0 \; (i>1)$, $Y_i=0 \; (i=2,\ldots,r)$,
from which emanates an $r$ parameter family of local solutions lying in the
region $\{W > 0, Y_i > 0 \, (i>1), {\mathcal Q} < 0, {\mathcal H} < 1 \}$.
The $r$ parameters may be thought of as $g_i(0), i>1$ and the constant $C$
in the conservation law (which has to be negative under the assumption that
$u(0)=0$). Homothetic solutions are eliminated by fixing the value of $\epsilon$.

Precompactness of the region where the subsystem flow lives shows that
the variables are bounded, so that the flow exists for all $s$.
Hence the same is true for the original flow also. As in Lemma 2.2 of
\cite{DW3} we can show that $X_i$ are positive for all $s$. It follows that
${\mathcal H} > 0$ and $X_i < \frac{1}{\sqrt{d_i}}$.
Furthermore, we still have the equation
\[
\left( \frac{W}{Y_i} \right)^{\prime} = \frac{X_i}{\sqrt{d_i}}
\left( \frac{W}{Y_i} \right),
\]
including the possibility $i=1$. So $\frac{W}{Y_i}$ increase monotonically
to limits $\sigma_i \in (0, \infty]$. (We shall presently show that $\sigma_i$
must all be equal to $+\infty$.)

As the trajectories of interest lie in a precompact set, each of them
has a nonempty $\omega$-limit set $\Omega$, where we suppressed the dependence
on the trajectory. Moreover, each $\Omega$ is compact, connected, and invariant
under both forward and backward flows.

As in \cite{DW3} (p. 1115) we can show that $\Omega$ lies in the
locus $ \{Y_i =0, 2 \leq i \leq r  \}$. Now on this locus the flow is just
the same as that in \cite{DW3}, and the arguments there (cf pp. 1116-1120)
show as before that $\Omega$ contains the origin (in the phase space for
the subsystem). The centre manifold argument on pp. 1121-1122 of \cite{DW3}
then shows the origin is a nonlinear sink, so in fact the trajectory converges
to the origin.

Now we can follow the arguments for Lemma 3.13 in \cite{DW3} to show that
\begin{equation} \label{asymplim}
\lim_{s \rightarrow \infty} \frac{X_i}{W^2} = \Lambda_i :=
\frac{\lambda_i}{\sigma_i^2 \sqrt{d_i}}  + \frac{\epsilon}{2} \sqrt{d_i},
\end{equation}
where $\Lambda_i >0$. This is valid in particular for $i=1$, in which case
$\Lambda_1 = \frac{\epsilon}{2}$. In fact, the proof of Lemma
3.15 in \cite{DW3} shows that $\sigma_i$ cannot be finite, and so
$\frac{\Lambda_i}{\sqrt{d_i}} = \frac{\epsilon}{2}$ for all $i$. Applying
this fact to the relation $ \frac{\dot{g}_i}{g_i} = \frac{1}{\sqrt{d_i}} \frac{X_i}{W} =
\frac{1}{\sqrt{d_i}} \frac{X_i}{W^2} W$, it follows that the
hypersurfaces have asymptotically decaying principal curvatures.

The limits (\ref{asymplim}) also imply that, for sufficiently large $s$, there exist
$a_1, a_2 > 0$ such that $a_1 W^4 \leq \G \leq a_2 W^4$, from which we deduce
completeness of the soliton metric by using the relation $dt = W ds$
and the equation (from (\ref{eqnW})) $W ds = \frac{dW}{\G - \frac{\epsilon}{2} W^2}$.
We further have $W \sim \frac{1}{\sqrt{\epsilon s}}$ and $s \sim
\frac{\epsilon t^2}{4}$.

The asymptotics for $g_i, i>1,$ are deduced as in \cite{DW3}. As for $g_1$, the equation
\[
\left( \frac{W}{Y_1} \right)^{\prime} = \frac{X_i}{\sqrt{d_i}}
\left( \frac{W}{Y_1} \right)
\]
and $X_1 \sim \frac{\epsilon}{2} W^2 \sim \frac{1}{2s}$
show that $g_1 = \frac{W}{Y_1}$ is also asymptotically linear in $t$, so
we have conical asymptotics for all factors.

\begin{rmk}
This contrasts with the steady case, where the asymptotic geometry for
$n=1, r=1$ (the cigar) is different from the paraboloid
asymptotics for the Bryant solitons with $n>1, r=1$. In the steady
case with $r>1$ our work in \cite{BDGW} yielded solitons of {\em mixed} asymptotic
 type, where $g_1$ tended to a positive constant and $g_i$ for $i>1$ behave
like $\sqrt{t}$.

In the expanding case, both the $n=1, r=1$ case (due to \cite{GHMS})
and the $n>1, r=1$ case (due to Bryant \cite{Bry}) have conical asymptotics,
and our solutions here for the $r>1$ case also exhibit conical behaviour.
\end{rmk}

We summarise the discussion in this section by the following

\begin{thm} \label{mainthm1}
Let $M_2, \ldots, M_r$ be closed Einstein manifolds with positive scalar
curvature. There is an $r$ parameter family of non-homothetic complete smooth
expanding gradient Ricci soliton structures on the trivial rank $2$ vector bundle
over $M_2 \times \ldots \times M_r$, with conical asymptotics in the sense given
above.     \qed
\end{thm}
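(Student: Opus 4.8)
The plan is to realize Theorem \ref{mainthm1} as the culmination of the dynamical systems analysis carried out in this section, translating statements about trajectories in $X,Y,W$-space back into statements about complete soliton metrics. The underlying strategy is the shooting approach: I would begin at the special critical point $\hat P$ of type (v) in Lemma \ref{zeros}, where the flat factor $M_1=S^1$ collapses, and follow the $r$-parameter family of trajectories $\gamma(s)$ emanating from its unstable manifold into the flow-invariant region $\{\mathcal Q<0,\,\mathcal H<1\}$. The first substantive step is to verify that this really yields an $r$-parameter family of \emph{distinct} (non-homothetic) solutions: the $r$ parameters are naturally identified with $g_i(0)$ for $i>1$ together with the constant $C<0$ in the conservation law, with the homothety freedom quotiented out by fixing $\epsilon$. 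I would confirm via the linearization (eigenvalues $2$, $1$ with multiplicity $r$, and $0$ with multiplicity $r$) and the results of \cite{Buz} that the family has the correct dimension after accounting for the degenerate zero eigenvalues.

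The second step is to establish that each such trajectory is defined for all $s\in(-\infty,\infty)$ and converges to the origin of the subsystem. Here I would invoke precompactness of the locus $\{\mathcal Q<0\}$ in the $2r$-dimensional subsystem phase space (obtained by dropping the $Y_1$-equation), which immediately gives global existence and a nonempty, compact, connected, flow-invariant $\omega$-limit set $\Omega$. Following the arguments of \cite{DW3} essentially verbatim, I would show $\Omega\subset\{Y_i=0:2\le i\le r\}$, reduce to the flow on this locus, and then use the centre manifold analysis of \cite{DW3} to conclude that the origin is a nonlinear sink, forcing $\gamma(s)\to 0$. The recovered variable $Y_1$ poses no obstruction since it enters only through its own equation and is reconstructed by the explicit integral formula.

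The third step is to extract the geometric conclusions from the asymptotics. Using the limit \eqref{asymplim} together with the key fact (from Lemma 3.15 of \cite{DW3}) that $\sigma_i=+\infty$, I would deduce $\Lambda_i/\sqrt{d_i}=\epsilon/2$ for all $i$, the estimate $a_1W^4\le\mathcal G\le a_2W^4$ for large $s$, and hence $W\sim 1/\sqrt{\epsilon s}$, $s\sim\epsilon t^2/4$. Feeding these into $dt=W\,ds$ and the relation $\dot g_i/g_i=\sqrt{d_i}^{-1}X_i W^{-1}$ gives completeness of the metric as $t\to\infty$, while the collapse of $S^1$ at $t=0$ (governed by $\hat P$) together with the smoothness conditions forces the total space to be the trivial rank-$2$ bundle over $M_2\times\cdots\times M_r$. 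The conical asymptotics for every factor, including $g_1$, follow from the asymptotic linearity in $t$ established above. That these solitons are non-Kähler is immediate from the product structure of the hypersurfaces and the fact that all $M_i$ are real Einstein factors of possibly odd dimension.

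I expect the main obstacle to be Step 2, specifically the verification that $\Omega$ collapses onto the origin rather than accumulating at some other critical point listed in Lemma \ref{zeros} (for instance, the type-(iii) or type-(vi) points, or the critical lines (iv),(v)). The delicate part is ruling out these alternatives and confirming the nonlinear sink structure at the origin, which rests on the centre manifold computation inherited from \cite{DW3}; the presence of the degenerate flat direction ($\lambda_1=0$, so $\mathcal Q$ carries no $Y_1$ term and the region $\{\mathcal Q<0\}$ fails to be precompact in the full phase space) is precisely what necessitates passing to the subsystem, and one must check carefully that this reduction, borrowed from \cite{BDGW}, does not destroy the convergence argument. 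Once convergence to the origin is secured, the remaining steps are largely bookkeeping parallel to the positive-$\lambda_i$ analysis of \cite{DW3}.
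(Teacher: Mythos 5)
Your proposal follows essentially the same route as the paper's own argument: trajectories emanating from the type-(v) critical point $\hat{P}$ into the flow-invariant region $\{\mathcal{Q}<0,\ \mathcal{H}<1\}$, reduction to the $2r$-dimensional subsystem (dropping the $Y_1$-equation) to obtain precompactness and global existence, the $\omega$-limit set and centre-manifold arguments imported from \cite{DW3} to get convergence to the origin, and the asymptotic analysis via $\sigma_i=+\infty$ and $\Lambda_i/\sqrt{d_i}=\epsilon/2$ yielding completeness and conical behaviour for all factors including $g_1$. The parameter count ($g_i(0)$ for $i>1$ together with $C<0$, homothety fixed by normalizing $\epsilon$) and the appeal to \cite{Buz} at the linearization with eigenvalues $2$, $1$ ($r$ times), $0$ ($r$ times) also match the paper, so your attempt is correct and essentially identical in approach.
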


\begin{rmk} \label{ricci}
As in \cite{DW3}, we can see directly from the equations that the soliton
potential $u$ is concave, in accordance with Proposition \ref{expander-pot}.
We can similarly deduce directly that $\ric(\bar{g}) + \frac{\epsilon}{2}\, \bar{g}$
is positive semidefinite, so $-u$ is subharmonic.

Next we note that when $r \geq 2$, the sectional curvatures $\kappa(X \wedge Y) $,
for $X, Y$  tangent to different Einstein factors, satisfy
$ -\frac{c_1}{t^2} \leq \kappa(X \wedge Y) \leq -\frac{c_2}{t^2} < 0$ for certain
positive constants $c_1, c_2$. This shows that the hypothesis of
$\lim_{t\rightarrow \infty} t^2|{\rm sect}| = 0$ in many results in \cite{Chc} is not
satisfied by our examples. In particular, the simplest hypersurface type in our
examples is $S^1 \times S^{n-1}$ (cf Theorem 4 in \cite{Chc}).

Furthermore, all sectional curvatures decay faster than $t^{-2 + \delta}$ for an
arbitarily small $\delta > 0$. Hence the ambient scalar curvature $\bar{R}$ tends
to zero. Finally we note that none of the hypotheses (topological or metric) in the recent
rigidity theorem of Chodosh \cite{Cho} are satisfied by our examples.
\end{rmk}

\section{\bf Complete Einstein metrics with negative scalar curvature}

We may also consider the flow of equations (\ref{eqnX})-(\ref{eqnW})
in the variety $\{{\mathcal Q}=0, {\mathcal H}=1 \}$. Such solutions
of course correspond to Einstein metrics with negative scalar curvature,
the soliton potential now being constant. In the case when $d_i>1$
for all $i$, such metrics were constructed earlier in \cite{Bo}
by dynamical systems methods as well. In \cite{DW3} we pointed out (in Remark
4.13 there) that a simpler proof of B\"ohm's result can be obtained using our special
variables and the embedding of the Einstein system within the soliton system.

In the present situation, where $d_1=1$, the hypersurfaces in
the multiple warped product no longer admit a positive Einstein
product metric whose hyperbolic cone acts as an attractor for the Einstein
system. Nevertheless our setup allows us easily to deduce the following

\begin{thm} \label{mainthm2}
Let $M_2, \ldots, M_r$ be compact Einstein manifolds with positive scalar
curvature. There is an $r-1$ parameter family of non-homothetic complete smooth
Einstein metrics on the trivial rank $2$ vector bundle over $M_2 \times \ldots \times M_r$.
\end{thm}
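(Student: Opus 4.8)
The plan is to realize these Einstein metrics as trajectories of the soliton system (\ref{eqnX})--(\ref{eqnW}) lying in the flow-invariant variety $V:=\{{\mathcal Q}=0,\ {\mathcal H}=1\}$, on which, as noted above, the potential $u$ is constant and hence $\ric(\bar g)=-\frac{\epsilon}{2}\bar g$ with negative Einstein constant. The critical point $\hat P$ of type (v) (with $W=0$, $X_1=1$, and all other coordinates zero) lies in $V$ and encodes the smooth collapse of the $S^1$ factor at $t=0$, so I would seek trajectories in its unstable manifold \emph{within} $V$. Working in the subsystem that omits $Y_1$, the tangent space $T_{\hat P}V=\ker d{\mathcal Q}\cap\ker d{\mathcal H}$ is cut out by $dX_1=0$ and $\sum_{i>1}\sqrt{d_i}\,dX_i=0$, since $d{\mathcal Q}=2\,dX_1$ and $d{\mathcal H}=\sum\sqrt{d_i}\,dX_i$ at $\hat P$. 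The decisive point is that the eigenvalue-$2$ direction of the linearization computed above is precisely the $X_1$-axis, which is transverse to $V$; restricting to $V$ removes it, leaving eigenvalues $1$ (multiplicity $r$, from $W$ and the $Y_i$, $i>1$) and $0$ (multiplicity $r-2$, from the constrained $X_i$). Thus the unstable manifold of $\hat P$ in $V$ has dimension $r$, exactly one less than in the soliton case, and modulo time-reparametrization (with $\epsilon$ fixed to kill homotheties) this yields an $(r-1)$-parameter family of emanating trajectories, the parameters being the sizes $g_i(0)$, $i>1$, of the Einstein factors at the soul.

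For global existence and asymptotics I would reuse the soliton machinery almost verbatim. Since $V$ sits in the closure of the precompact region $\{{\mathcal Q}<0\}$ of the subsystem phase space, each such trajectory exists for all $s$ and has a nonempty compact connected $\omega$-limit set, which by the argument used for Theorem \ref{mainthm1} lies in $\{Y_i=0,\ i\ge 2\}$; in particular $Y_i(s)\to 0$. Feeding this into (\ref{eqnW}) and using ${\mathcal Q}=0$ to write $\G=1-\sum_{i>1}\lambda_iY_i^2-\frac{\epsilon(n-1)}{2}W^2$, the $W$-equation becomes asymptotically the logistic equation $W'=W\bigl(1-\frac{\epsilon n}{2}W^2\bigr)$, whose unique positive equilibrium $W=\sqrt{2/(\epsilon n)}$ is a sink; a comparison argument then forces $W\to\sqrt{2/(\epsilon n)}$. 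The constraints ${\mathcal Q}=0$ and ${\mathcal H}=\sum\sqrt{d_i}X_i=1$ now give $\sum X_i^2\to\frac1n$ together with $\sum\sqrt{d_i}X_i=1$, and the equality case of Cauchy--Schwarz pins $X_i\to\sqrt{d_i}/n$. Hence the trajectory converges to $E_+$. At $E_+$ one has $\dot g_i/g_i\to\sqrt{\epsilon/(2n)}$ for every $i$, so all factors grow at a common exponential rate, giving asymptotically hyperbolic ends with exponential volume growth; integrating $dt=W\,ds$ with $W$ bounded away from $0$ near $E_+$ yields $t\to\infty$, so the metric is complete. Smoothness at $t=0$ and the identification of the total space as the trivial rank-$2$ bundle $\R^2\times M_2\times\cdots\times M_r$ are inherited unchanged from the soliton construction.

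The main obstacle is conceptual rather than computational, and it is exactly the one flagged in the introduction: in B\"ohm's construction and in \cite{DW3} the convergence of the Einstein trajectory is driven by the hyperbolic cone over a \emph{positive} Einstein product metric on the hypersurface, acting as an attractor, whereas here, because $d_1=1$, the hypersurface $S^1\times M_2\times\cdots\times M_r$ admits no such metric and that mechanism is unavailable. The substitute is precisely the embedding of the Einstein system inside the soliton system: once the soliton-case $\omega$-limit analysis has been imported to conclude $Y_i\to 0$, the residual dynamics on $V\cap\{Y_i=0,\ i\ge 2\}$ is governed by a one-dimensional logistic law for $W$, and the Cauchy--Schwarz equality forced by the two conservation constraints supplies the convergence to $E_+$ with no appeal to a cone attractor. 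Finally, the $(r-1)$ parameters $g_i(0)$ scale the Einstein factors independently, so with $\epsilon$ fixed the resulting metrics are pairwise non-homothetic, completing the proof.
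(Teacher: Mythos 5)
Your proposal is correct, and its skeleton coincides with the paper's: trajectories emanating from the critical point $P$ inside the flow-invariant variety $\{{\mathcal Q}=0,\ {\mathcal H}=1\}$, global existence from compactness of $\{{\mathcal Q}=0\}$ in the subsystem omitting $Y_1$, the soliton-case fact that the $\omega$-limit set lies in $\{Y_i=0,\ i\geq 2\}$, convergence to $E_+$, and completeness from $dt=W\,ds$ with $W$ tending to a positive constant. Where you genuinely diverge is the endgame. The paper first rules out points with $W=0$ in the $\omega$-limit set (they would be type (ii) critical points of Lemma \ref{zeros}, excluded by the argument of Prop.\ 3.6 of \cite{DW3}), then uses the dynamics $J'=2J(J-1)$ of $J=\G-\frac{\epsilon}{2}W^2$ on the locus $\{{\mathcal Q}=0,\ {\mathcal H}=1,\ Y=0\}$ to show that a non-critical limit point would flow backwards to $W=0$, contradicting invariance and compactness of $\Omega$; convergence then rests on the sink property of $E_+$ from Lemma 3.8 of \cite{DW3}. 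You instead extract $Y_i\to 0$ directly, rewrite (\ref{eqnW}) via ${\mathcal Q}=0$ as an asymptotically autonomous logistic equation for $W$, and use a two-sided comparison to force $W\to\sqrt{2/(\epsilon n)}$; the two constraints then give $\G\to 1/n$ while ${\mathcal H}=\sum_i\sqrt{d_i}X_i=1$, and the equality case of Cauchy--Schwarz pins $X_i\to\sqrt{d_i}/n$ (note $\G\geq 1/n$ on the variety by Cauchy--Schwarz, so the limit is approached from above, making the pinching argument clean). This substitute is sound and buys self-containedness: it needs neither the exclusion of $W=0$ limit points nor the cited sink property of $E_+$, only positivity of $W$ along the trajectory, which is flow-invariant. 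Your linearization count at $\hat P$ ($d{\mathcal Q}=2\,dX_1$ and $d{\mathcal H}=\sum_i\sqrt{d_i}\,dX_i$ at $\hat P$, the eigenvalue-$2$ direction transverse to the variety, leaving an $r$-dimensional unstable manifold inside it and hence an $(r-1)$-parameter family of trajectories) is consistent with the paper, where the lost parameter relative to the soliton case is the conservation-law constant $C$, now fixed by constancy of $u$. The one input you should flag as nonelementary is the containment $\Omega\subset\{Y_i=0,\ i\geq 2\}$, which you import, exactly as the paper does, from the soliton-case argument of \cite{DW3}; everything downstream of it in your version is elementary.
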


To prove the theorem, we consider the $r-1$ parameter family of trajectories
emanating from the critical point $P$ and lying in the variety
$\{{\mathcal Q}=0$, ${\mathcal H}=1\}$. Note that this variety is smooth.

As in the previous section, we see that the flow is defined for all $s$
by first restricting to the subsystem obtained by omitting the equation
for $Y_1$ and observing that the locus $\{{\mathcal Q}=0\}$ is compact.
As usual we can take $Y_i, W$ positive on our trajectories, and we can show
$X_i$ are positive also. In the following we will work with the subsystem.

The $\omega$-limit set $\Omega$ of a fixed trajectory will lie within
the locus $\{Y_i =0 : i=2,\ldots, r \}$ by the same argument as in the soliton
case. However, the difference now is that no point in $\Omega$ can have
$W$-coordinate equal to $0$. Otherwise, ${\mathcal G}= 1$ and such a point
is a critical point of type (ii) in Lemma \ref{zeros}. The argument in the last
part of the proof of Proposition 3.6 in \cite{DW3} then leads to a contradiction.
This in particular implies that the only critical point of the flow lying in $\Omega$
is $E_{+}$ (since $W>0$ along our trajectory).

We next consider the trajectory starting from a non-critical point in $\Omega$.

Recall from \cite{DW3} that on the locus
$\{{\mathcal Q}=0, {\mathcal H}=1, Y=0 \}$, the quantity
$J := G -\frac{\epsilon}{2} W^2$ satisfies $0 \leq J \leq 1$
and the equation
\[
J^{\prime} = 2J (J-1).
\]
Moreover, $J=1$ exactly when $W=0$ and $\G = 1$, and $J=0$ exactly
at the critical points $E_{\pm}$ (of type (vi) in Lemma \ref{zeros}).
Points with $W>0$ (resp. $W<0$) flow to $E_{+}$ (resp. $E_{-}$) and
flow backwards to $W=0$.

For our trajectory $W$ is necessarily positive, so we obtain a
contradiction since $\Omega$ is compact, flow-invariant, and contains no
point with zero $W$-coordinate. We therefore deduce that $\Omega$ is $\{E_+ \}$.
Now it was observed in Lemma 3.8 of \cite{DW3} that for the
flow on $\{{\mathcal Q} =0, {\mathcal H}=1\}$, the point $E_{+}$
is a sink, so our (original) trajectory converges to $E_{+}$.

As $dt = W ds$ and $W$ is converging to a positive constant we deduce
the metric is complete. Using (\ref{def-Xi}) we see that the metric
components $g_i^2$ grow exponentially fast asymptotically.

\medskip

We end this section with some consequences of combining our existence theorems
with  a study of the differential topology of some of our examples.

\medskip

We will focus on the case where $r=2$ and $M_2$ is a homotopy sphere.
Recall that Boyer, Galicki and Koll\'ar \cite{BGK1}, \cite{BGK2} have constructed
Sasakian Einstein metrics with positive scalar curvature on all Kervaire spheres
(with dimension $4m+1$) and those homotopy spheres of dimension $7, 11,$ or $15$
which bound parallelizable manifolds. As in \cite{BDGW} we can take these Einstein
manifolds or the standard sphere as $M_2$ in our constructions in \S 2 and \S 3.
Since it follows from the independent work of K. Kawakubo \cite{Ka} and
R. Schultz \cite{Sc} that the manifolds $\R^2 \times M_2$ and $\R^2 \times S^q$
are not diffeomorphic if $M_2$ is an exotic sphere (cf \cite{KwS}), we deduce the following

\begin{cor} \label{diffstr}
In dimensions $9, 13, 17$ and all dimensions $4m+3$ with $m \neq 0, 1, 3, 7, 15, 31$
there exist pairs of homeomorphic but not diffeomorphic manifolds both of which admit
non-Einstein, complete, expanding gradient Ricci soliton structures. The same holds
for complete Einstein metrics with negative scalar curvature.    \qed
\end{cor}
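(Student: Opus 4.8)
The plan is to manufacture the required pairs by feeding homotopy spheres into Theorems \ref{mainthm1} and \ref{mainthm2} in the case $r=2$, and then invoking differential topology to distinguish the resulting smooth manifolds. First I would set up the dimension bookkeeping: both existence theorems equip $\R^2 \times M_2$ with the asserted structures, so taking $M_2$ to be a homotopy sphere of dimension $q$ yields a manifold of dimension $q+2$. The strategy for each target dimension is to use two choices of $M_2$ carrying positive Einstein metrics---an exotic sphere $\Sigma^q$ on one side and the round sphere $S^q$ on the other---so that $\R^2 \times \Sigma^q$ and $\R^2 \times S^q$ are homeomorphic (being products of homeomorphic factors) while, by the theorem of Kawakubo and Schultz cited above, they are \emph{not} diffeomorphic precisely because $\Sigma^q$ is exotic.

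The existence of a positive Einstein metric on the exotic factor is supplied by the Boyer--Galicki--Koll\'ar construction, which yields Sasakian Einstein metrics of positive scalar curvature on all Kervaire spheres $\Sigma^{4m+1}$ and on those homotopy spheres of dimension $7, 11, 15$ bounding parallelizable manifolds. Inserting these, alongside the standard spheres, as the factor $M_2$ into the two existence theorems produces both the non-Einstein expanding soliton structures and the complete negative Einstein metrics on each member of every pair. The three isolated dimensions $9, 13, 17$ come from the bounding homotopy spheres of dimension $7, 11, 15$ (adding the two $\R^2$ dimensions), while the infinite family of dimensions $4m+3$ comes from the Kervaire spheres $\Sigma^{4m+1}$.

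The subtle point, and the one I expect to be the crux, is deciding exactly when the Kervaire sphere $\Sigma^{4m+1}$ is genuinely exotic, since the whole argument fails in a dimension where $\Sigma^{4m+1}$ happens to be standard. Here I would use that the Kervaire sphere bounds a parallelizable manifold and is diffeomorphic to the standard sphere if and only if there is a closed framed manifold of Kervaire invariant one in dimension $4m+2$. By Browder's theorem this forces $4m+2 = 2^j - 2$, and the resolution of the Kervaire invariant one problem (Hill--Hopkins--Ravenel, together with the low-dimensional cases) shows that such manifolds exist only in dimensions $2, 6, 14, 30, 62$ and possibly $126$, that is, for $m \in \{0, 1, 3, 7, 15, 31\}$. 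Consequently $\Sigma^{4m+1}$ is exotic exactly when $m \notin \{0,1,3,7,15,31\}$, matching the excluded values in the statement. With this dichotomy established, the corollary is immediate: in each listed dimension we have exhibited a homeomorphic pair, non-diffeomorphic by Kawakubo--Schultz, each member of which carries both the expanding soliton structure from Theorem \ref{mainthm1} and the negative Einstein metric from Theorem \ref{mainthm2}.
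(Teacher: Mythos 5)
Your proposal is correct and follows essentially the same route as the paper: take $r=2$ with $M_2$ a Boyer--Galicki--Koll\'ar Sasakian--Einstein homotopy sphere (Kervaire spheres in dimensions $4m+1$, and bounding homotopy spheres in dimensions $7,11,15$) versus the round sphere, feed both into Theorems \ref{mainthm1} and \ref{mainthm2}, and distinguish $\R^2 \times \Sigma^q$ from $\R^2 \times S^q$ by the Kawakubo--Schultz theorem. Your explicit Browder/Hill--Hopkins--Ravenel justification of the excluded values $m \in \{0,1,3,7,15,31\}$ simply makes precise the exoticity dichotomy that the paper leaves implicit in its list of dimensions.
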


Note also that our expanding gradient Ricci solitons and negative Einstein manifolds
also exhibit conical asymptotics. The corresponding cones are differentially of the form
$\R_{+} \times S^1 \times M_2$, where $\R_{+}$ is the set of positive real numbers.
We are indebted to Ian Hambleton for providing an outline of the proof of the following
consequence of the above-mentioned work of Kawakubo and Schultz.

\begin{prop} \label{diff-str-cone}
Let $\Sigma^q$ and $S^q$ denote respectively a non-standard homotopy sphere and
the standard $q$-sphere. Then the open cones $\R_{+} \times S^1 \times \Sigma$
and $\R_{+} \times S^1 \times  S^q $ are not diffeomorphic.
\end{prop}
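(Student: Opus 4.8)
The plan is to reduce the statement to the non-diffeomorphism of $\R^2 \times \Sigma^q$ and $\R^2 \times S^q$ (the result of Kawakubo and Schultz quoted above), by passing to universal covers. First I would record that each open cone is a smooth product whose fundamental group is infinite cyclic: since $\R_{+}$ is contractible and the homotopy sphere $\Sigma^q$ is simply connected (being a homotopy sphere of dimension $\geq 2$, as is $S^q$), one has $\pi_1(\R_{+} \times S^1 \times \Sigma^q) \cong \pi_1(S^1) \cong \Z$, and likewise for the standard cone.

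Next I would compute the two universal covers. Because the universal cover of a product is the product of the universal covers of the factors, and the universal cover of $S^1$ is $\R$, the universal cover of $\R_{+} \times S^1 \times \Sigma^q$ is diffeomorphic to $\R_{+} \times \R \times \Sigma^q$. The key observation is that the open half-plane $\R_{+} \times \R$ is diffeomorphic to $\R^2$, for instance via $(r,\theta) \mapsto (\log r, \theta)$; hence the universal cover is diffeomorphic to $\R^2 \times \Sigma^q$, and not merely to a cone with its apex removed. The identical computation identifies the universal cover of $\R_{+} \times S^1 \times S^q$ with $\R^2 \times S^q$.

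Finally I would argue by contradiction. A diffeomorphism between the two connected open cones carries the $\Z$ fundamental group of one isomorphically onto that of the other, and therefore lifts to a diffeomorphism of their universal covers. Combined with the previous step this would yield a diffeomorphism $\R^2 \times \Sigma^q \cong \R^2 \times S^q$, directly contradicting the Kawakubo--Schultz result when $\Sigma^q$ is exotic; hence the cones cannot be diffeomorphic. The only substantive input is this cited non-diffeomorphism, so there is no real analytic or topological obstacle in the remaining steps (the $\pi_1$ computation, the product structure of universal covers, and the lifting of diffeomorphisms are all formal). The single point demanding care is the identification $\R_{+} \times \R \cong \R^2$, which is precisely what turns the abstract universal cover into the recognizable manifold $\R^2 \times \Sigma^q$ appearing in Kawakubo--Schultz.
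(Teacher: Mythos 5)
Your proof is correct, but it takes a genuinely different route from the paper's. The paper's argument (due to Hambleton) works directly with the hypothesis of a diffeomorphism $\phi$ between the cones: using compactness and Alexander duality it shows that $\phi(\{1\} \times S^1 \times \Sigma^q)$ is a separating hypersurface in a slab $(a,b) \times S^1 \times S^q$, verifies that the region between this hypersurface and a standard slice $Y_a$ is an $h$-cobordism, and then uses the vanishing of the Whitehead group of $\Z$ together with the $s$-cobordism theorem to produce a diffeomorphism $S^1 \times \Sigma^q \cong S^1 \times S^q$, contradicting the \emph{closed-manifold} result of Kawakubo \cite{Ka} and Schultz \cite{Sc}. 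You instead pass to universal covers, reducing the statement to the non-diffeomorphism of $\R^2 \times \Sigma^q$ and $\R^2 \times S^q$; all the intermediate steps you flag (the $\pi_1$ computation, lifting a diffeomorphism to universal covers, and the identification $\R_{+} \times \R \cong \R^2$) are indeed formal and correct, so your argument is complete \emph{given} that open-manifold input. The trade-off is in the strength of the cited input: the open-product statement $\R^2 \times \Sigma^q \not\cong \R^2 \times S^q$ is exactly what the paper quotes from \cite{Ka}, \cite{Sc} (cf \cite{KwS}) before Corollary \ref{diffstr}, so within the paper your proof is legitimate and considerably shorter; but that open statement is itself usually derived from the closed product result by an end/$h$-cobordism argument of precisely the kind the paper carries out, so in effect your proof relocates the $s$-cobordism work into the citation, whereas the paper's proof is self-contained down to the more primitive fact that $S^1 \times \Sigma^q \not\cong S^1 \times S^q$. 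It is also worth noting that your covering-space step can be seen more directly: $\R_{+} \times S^1$ is an annulus, so the cone is already diffeomorphic to $(S^1 \times \Sigma^q) \times \R$, whose universal cover is $\R^2 \times \Sigma^q$; this makes transparent why your reduction and the paper's ($h$-cobordism between slices of the $\R$-stabilized closed product) are two faces of the same phenomenon.
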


\begin{proof} (I. Hambleton)
Let $\phi: \R_{+} \times S^1 \times \Sigma^q \longrightarrow \R_{+} \times S^1 \times S^q$
be an orientation preserving diffeomorphism. For convenience, let $X = S^1 \times \Sigma^q$,
$Y = S^1 \times S^q$, and $X_a = \{a\} \times X$, $Y_b = \{b\} \times Y$. By compactness,
$\phi(X_1) \subset (a, b) \times Y$ for some $0< a < b$. Moreover, by Alexander duality
(applied e.g. to $(a, b) \times Y$ with the ends capped off by attaching $D_{\pm}^2 \times Y$),
$\phi(X_1)$ is a two-sided hypersurface that separates $(a, b) \times Y$ into two path-connected
open submanifolds of $\R_{+} \times Y$.

Let $W_{\pm}$ denote the closures of these path components. Then, using the diffeomorphism
$\phi$, which has to preserve the ends of $\R_{+} \times X$ and $\R_{+} \times Y$,
one easily sees that $W_{-}$ (resp. $W_{+}$) is a compact manifold whose boundary
consists of $Y_a$ and $\phi(X_1)$ (resp. $\phi(X_1)$ and $Y_b$). Moreover, by composition
with suitable retractions and the restrictions of $\phi$ or $ \phi^{-1}$ to suitable
subsets, one also sees easily that the inclusion of the boundary components into $W_{-}$
are homotopy equivalences, i.e., $W_{-}$ is an $h$-cobordism between its boundary components.
Noting that the Whitehead group of $\pi_1(S^1 \times S^q) = \Z$ is trivial and
applying the $s$-cobordism  theorem, we get a contradiction to the result of Kawakubo and
Schultz that $S^1 \times \Sigma^q$ and $S^1 \times S^q$ are not diffeomorphic.
\end{proof}

Hence we obtain for the dimensions given in Corollary \ref{diffstr} pairs of
non-Einstein complete expanding gradient Ricci solitons (or complete negative
Einstein manifolds) whose asymptotic cones are homeomorphic but not diffeomorphic.

\section{\bf Numerical examples}

We shall now look at some numerical solutions of the equations
(\ref{TT})-(\ref{TS}). The Ricci soliton equation in the cohomogeneity one
setting has an irregular singular point at $t=0$. We therefore follow the
procedure in \cite{DHW}, \S 5 and \cite{BDGW}. That is, we first find a series
solution in a neighbourhood of the singular orbit satisfying the appropriate
smoothness conditions. We then truncate the series and use the values of the
resulting functions at some small $t_0 > 0$ as initial values to generate solutions
of the equations for $t > t_0$ via a fourth order Runge-Kutta scheme. Because
the manifolds we are considering are non-compact, we check the numerics obtained
against the general asymptotic properties given in the first section.

The explicit cases that we shall look at are those where the hypersurface is
the twistor space of quaternionic projective space and the total space of the
corresponding ${\rm Sp}(1)$ bundle. For these examples, the estimates
${\mathcal Q}<0$ and ${\mathcal H} < 1$ do not give coercive estimates,
and we do not yet have analytical existence proofs. However the numerics
give a strong indication that complete expanding solitons exist
in these cases.

Let us recall the equations that will be analysed numerically, following
\cite{BDGW}. We consider cohomogeneity one manifolds with principal orbits
$G/K$ whose isotropy representation consists of two inequivalent
${\rm  Ad}(K)$-invariant irreducible real summands. We assume that $K
\subset H \subset G$ where $H, K$ are closed subgroups of the compact
Lie group $G$ such that $H/K$ is a sphere. A $G$-invariant
background metric $\mathsf b$ is chosen on $G/K$ such that it induces the
constant curvature $1$ metric on $H/K$. The cohomogeneity one manifolds
are then the vector bundles $G \times_{H} \R^{d_{1}+1}$ where $H/K
\subset \R^{d_{1}+1}$ is regarded as the unit sphere.

Let $\g = \kf \oplus \p$ be an ${\rm Ad}(K)$-invariant decomposition
of the Lie algebra of $G$, so that $\p$ is identified with the tangent
space of $G/K$ at the base point.  We can further decompose $\p$ into
irreducible $K$-modules; thus $\p = \p_1 \oplus \p_2$ where $\p_1$ and
$\p_2$ are respectively the tangent spaces (at the base point) to the sphere
$H/K$ and the singular orbit $G/H$. Their respective dimensions
are denoted by $d_1$ and $d_2$.

The metrics of cohomogeneity one take the form
$$ \bar{g} = dt^2 + g_1(t)^2 \; {\mathsf b}|\p_1 + g_2(t)^2 \; {\mathsf b}|\p_2.$$
Letting $(z_1, \ldots, z_6):=(g_1, \dot{g_1}, g_2, \dot{g_2}, u, \dot{u})$, the
gradient Ricci soliton equations become
\begin{eqnarray*} \label{2summands}
\dot{z_1} & = & z_2  \\
\dot{z_2} & = & -(d_1 -1)\frac{z_2^2}{z_1} -d_2 \frac{z_2 z_4}{z_3} + z_2 z_6 + \frac{d_1 -1}{z_1}
            + \frac{A_3}{d_1} \frac{z_1^3}{z_3^4} + \frac{\epsilon}{2}z_1 \\
\dot{z_3} & = & z_4 \\
\dot{z_4} & = & -d_1 \frac{z_2 z_4}{z_1} -(d_2-1)\frac{z_4^2}{z_3} + z_4 z_6
        + \frac{A_2}{d_2} \frac{1}{z_3} -2\frac{A_3}{d_2} \frac{z_1^2}{z_3^3} + \frac{\epsilon}{2}z_3 \\
\dot{z_5} & = & z_6 \\
\dot{z_6} & = & -z_6 \left( d_1 \frac{z_2}{z_1} + d_2 \frac{z_4}{z_3} \right)
            + z_6^2 + \epsilon z_5 + C,
\end{eqnarray*}
where $A_i$ are positive constants which appear in the scalar curvature function
of the principal orbit. Note that because of the backgound metric chosen, the
coefficient $\frac{A_1}{d_1}$ of the $\frac{1}{z_1}$ term in the second equation
becomes $d_1 -1$, and for expanding solitons we have $\epsilon>0$.

Recall also the general relation $(d_1 +1)\, \ddot{u}(0) = C + \epsilon u$, which follows
from the conservation law and the smoothness conditions at $t=0$. In generating the numerics,
we find it convenient to eliminate homothetic solutions by choosing $\epsilon$ to be $1$.
Furthermore, rather than setting $u(0)=0$, as was done throughout \S 1, we now set the constant $C$
to be zero. It then follows from the necessary condition ${\mathcal E} < 0$ that in the
series solution we must arrange for $\ddot{u}(0)= \frac{u(0)}{d_1+1} < 0$, with
$u(0)$ as an otherwise arbitrary parameter.

\medskip

\noindent{\bf Example 1.} \: We set $G={\rm Sp}(m+1),  H={\rm Sp}(m) \times {\rm Sp}(1),$
and $K={\rm Sp}(m) \times {\rm U}(1)$. The principal orbit $G/K$ is diffeomorphic to $\C{\PP}^{2m+1}$
and the singular orbit $G/H$ is $\HH{\PP}^m$. So $d_1=2, d_2 = 4m$, and $A_2 = 2m(m+2), A_3=\frac{m}{2}$
(with $\mathsf b$ chosen to be $-2\tr(XY)$). The initial values of $(z_1, \ldots, z_6)$ are given by
$(0, 1, \bar{h}, 0, \bar{u}, 0)$ where $\bar{h} > 0$ and $\bar{u} < 0$. These give rise to a
$2$-parameter family of numerical solutions.

In Figures 1 and 2 below we plot the functions $g_i$ and $u$ for the
$m=1$ and $m=2$ cases respectively with parameter values $\bar{h} = 6$ and $\bar{u} = -1$.

Note that the soliton potential is concave down and becomes approximately quadratic,
in accordance with Proposition \ref{expander-pot} and Proposition
\ref{expander-gradlb}. The $g_i$ are asymptotically linear.

\pagebreak

\begin{figure}[hbtp]
\includegraphics[width=15cm,height=9cm]{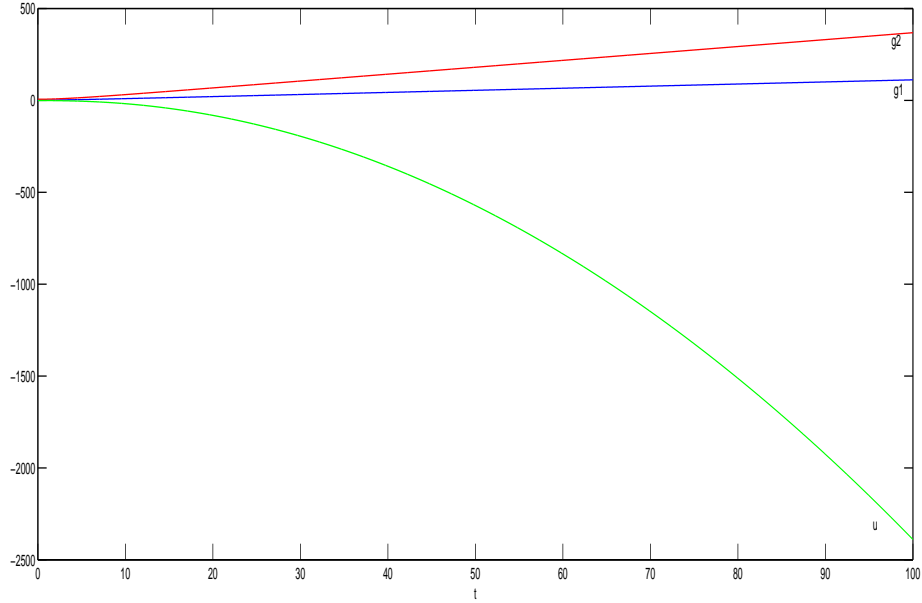}
\caption{$m=1$ case}
\end{figure}

\begin{figure}[hbtp]
\includegraphics[width=15cm,height=9cm]{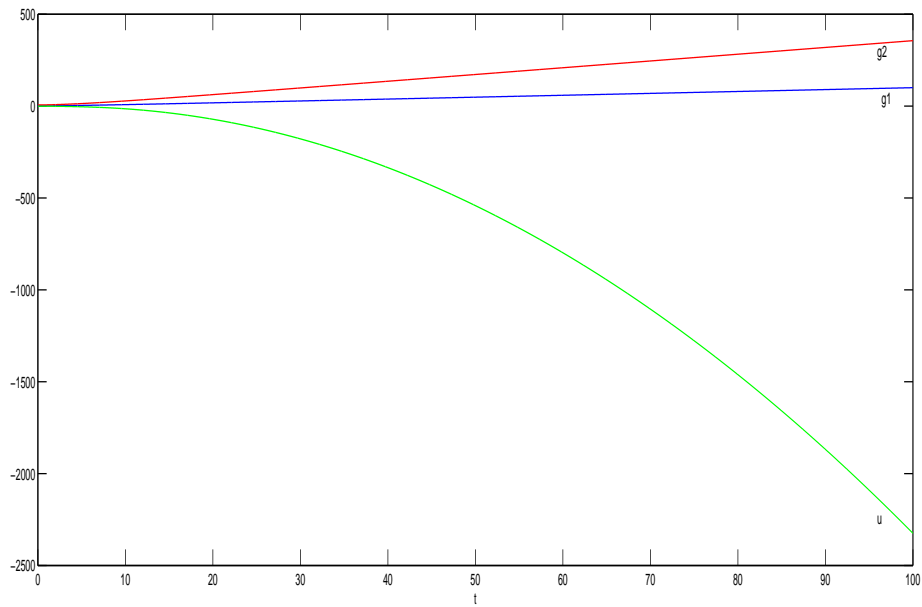}
\caption{$m=2$ case}
\end{figure}

We have also plotted the quantities ${\tilde X}_i =  \frac{X_i}{\sqrt{d_i}}$ and
${\tilde Y}_i = \frac{Y_i}{\sqrt{d_1}}$ against $t$ in Figures 3 and 4
for the $m=1$ and $m=2$ cases respectively. They all converge quickly to $0$.

\begin{figure}[hbtp]
\includegraphics[width=15cm,height=9cm]{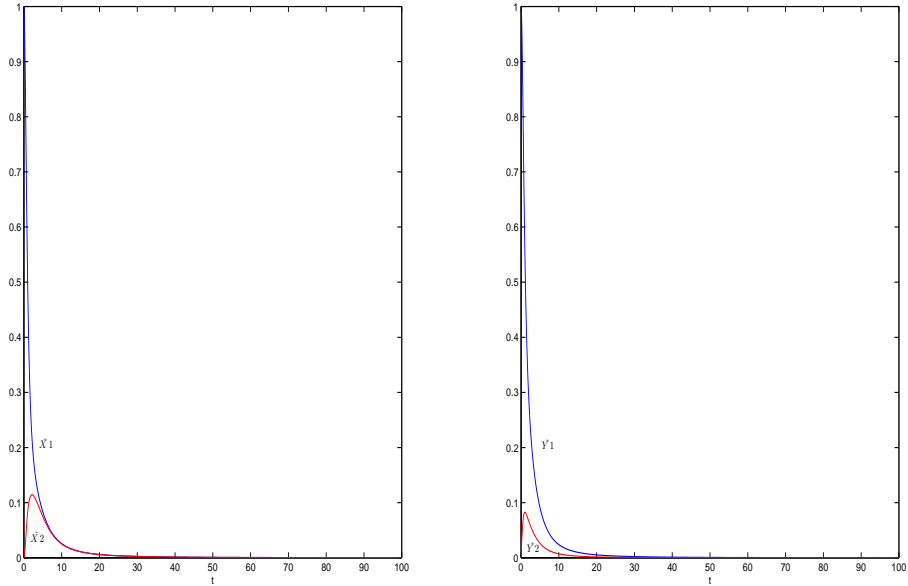}
\caption{$m=1$ case}
\end{figure}

\begin{figure}[hbtp]
\includegraphics[width=15cm,height=9cm]{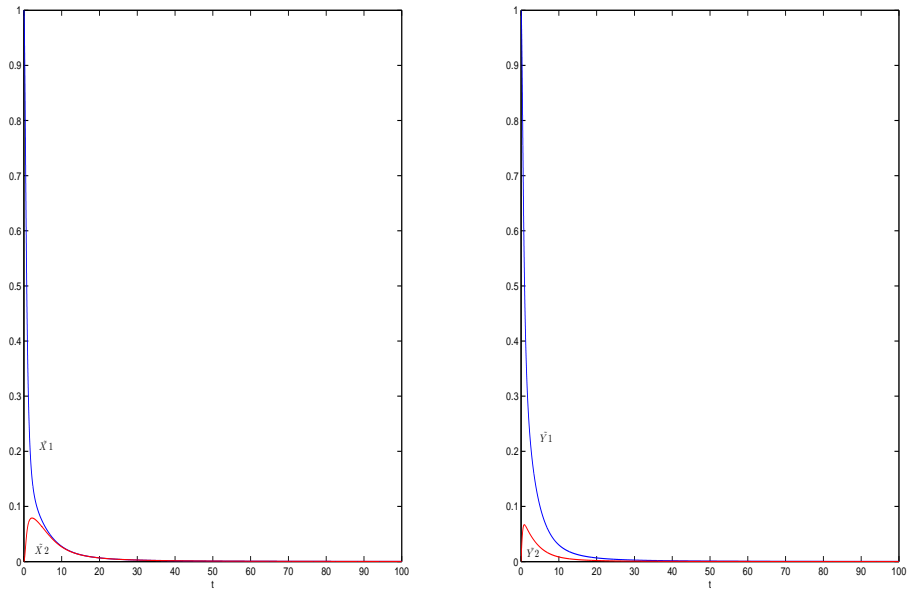}
\caption{$m=2$ case}
\end{figure}

In Figure 5 we plot the ratios ${\tilde X}_1/{\tilde X}_2$ and ${\tilde Y}_1/{\tilde Y_2}$.
Note that the second ratio is $\frac{g_2}{g_1}$, which tends to a positive constant. The first ratio
is the ratio of the principal curvatures $\frac{\dot{g}_2}{g_2}/ \frac{\dot{g}_1}{g_1}$ and
we see that it quickly approaches $1$.

\begin{figure}[hbtp]
\includegraphics[width=17cm,height=9cm]{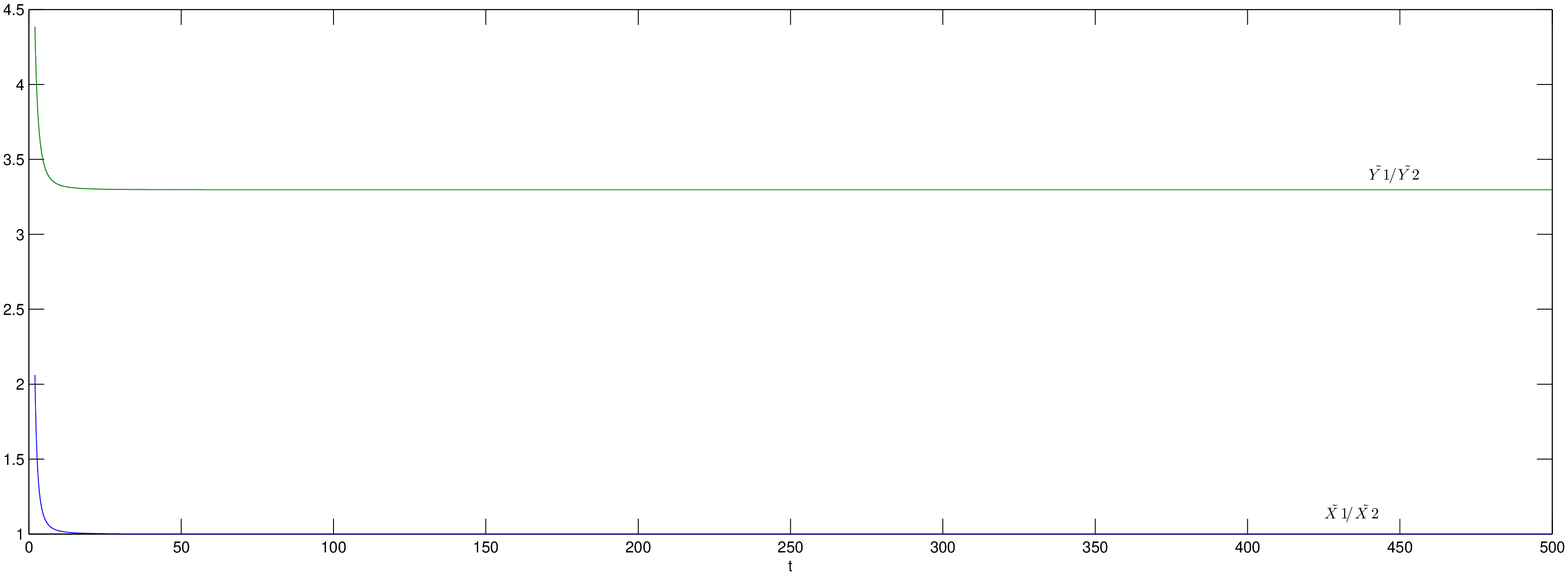}
\caption{}
\end{figure}

Similar numerical results hold for larger values of $m$.

\pagebreak

\noindent{\bf Example 2.} \: We next set  $G={\rm Sp}(m+1) \times {\rm Sp}(1),
H={\rm Sp}(m) \times {\rm Sp}(1) \times {\rm Sp}(1),$ and $K={\rm Sp}(m) \times \Delta{\rm Sp}(1)$.
The principal orbit $G/K$ is diffeomorphic to $S^{4m+3}$ and the singular orbit $G/H$ is
again $\HH{\PP}^m$. So $d_1=3, d_2 = 4m$, and $A_2 = 4m(m+2), A_3=\frac{3m}{4}$
(where $\mathsf b$ is given by $-2\tr(XY)$ on both of the simple factors).
The initial values of $(z_1, \ldots, z_6)$ are given by
$(0, 1, \bar{h}, 0, \bar{u}, 0)$ where $\bar{h} > 0$ and $\bar{u} < 0$.

For this case we obtain graphs very similar to those in Example 1.

\medskip

Based on the last two examples, we would conjecture that on the
vector bundles $G \times_H \R^{d_1+1},$ where $(G, H,  K)$ are as above, there is a
$2$-parameter family of non-homothetic complete expanding  gradient Ricci solitons.

\end{document}